\theoremstyle{plain}
\newtheorem{theorem}{Theorem}[section]
\newtheorem{corollary}[theorem]{Corollary}
\newtheorem{lemma}[theorem]{Lemma}
\theoremstyle{definition}
\newtheorem{definition}[theorem]{Definition}
\newtheorem{remark}[theorem]{Remark}
\numberwithin{equation}{section}
\newtheorem*{theorem*}{Theorem} 
\newcommand{\R}{{\mathbb R}}
\newcommand{\Z}{{\mathbb Z}}
\newcommand{\rn}{{\mathbb R}^{n}}
\newcommand{\N}{{\mathbb N}}
\newcommand{\sint}{\strokedint}
\DeclareMathOperator{\III}{III}
\DeclareMathOperator{\II}{II}
\DeclareMathOperator{\I}{I}
\DeclareMathOperator{\dive}{div}
\DeclareMathOperator{\supp}{supp}
\DeclareMathOperator{\diam}{diam}
\DeclareMathOperator{\dist}{dist}
\DeclareMathOperator{\bmo}{BMO}
\providecommand{\trm}[1]{\textrm{#1}}
\providecommand{\ab}[1]{  \lvert  #1  \rvert }
\providecommand{\no}[1]{  \lVert  #1  \rVert }
\providecommand{\la}[1]{  \langle #1 \rangle}
\title[A duality approach]{
A duality approach to gradient H\"older estimates
for linear divergence form elliptic equations
}
\date{\today}
\author{Olli Saari} 
\author{Yuanlin Sun}
\author{Hua-Yang Wang}
\author{Yuanhong Wei}
\address{Olli Saari: Departament de Matem\`atiques,
	Universitat Polit\`ecnica de Catalunya,
	Avinguda Diagonal 647, 08028 Barcelona,
	Catalunya, Spain and Institute of Mathematics, Universitat Polit\`ecnica de Catalunya, Pau Gargallo 14, 08028 Barcelona, Catalunya, Spain}
\address{Centre de Recerca Matem\`atica, Edifici C, Campus Bellaterra, 08193 Bellaterra, Catalunya, Spain}
\email{olli.saari@upc.edu}
\address{Yuanlin Sun: School of Mathematics, Jilin University, Changchun 130012, P.R. China}
\address{Departament de Matem\`atiques,
	Universitat Polit\`ecnica de Catalunya,
	Avinguda Diagonal 647, 08028 Barcelona,
	Catalunya, Spain}
\email{yuanlin.sun98@hotmail.com}
\address{Hua-Yang Wang: School of Mathematical Sciences, Laboratory of Mathematics and Complex Systems, MOE, Beijing Normal University, 100875 Beijing, P.R. China}
\email{wanghuayang@amss.ac.cn}
\address{Yuanhong Wei: School of Mathematics, Jilin University, Changchun 130012, P.R. China}
\email{weiyuanhong@jlu.edu.cn}
\begin{document}

\begin{abstract}
We show how an iteration argument inspired by sparse domination bounds can be used to deduce gradient reverse H\"older inequalities for equations with non-constant coefficients from the theory for constant coefficient equations.
We deal with coefficient matrices whose entries are either H\"older continuous or just uniformly continuous,
leading to different results.
The purpose of the approach is to highlight the connection between Schauder theory and duality of local Hardy spaces and local H\"older spaces.
In addition, we prove a sparse bound in the context of Schauder theory for divergence form elliptic partial differential equations. 
\end{abstract}

\maketitle 

\tableofcontents

\section{Introduction}
  
In this paper,
we work in an open subset $\Omega \subset \rn$ with $n \ge 2$ and study local weak solutions to divergence form elliptic partial differential equations,
functions $u \in W^{1,2}_{loc}(\Omega)$ satisfying 
\begin{equation}
\label{eq:intro-elliptic-equation}
- \dive A\nabla u = 0
\end{equation}
in $\Omega$ in the sense of distributions.
Here $A$ is a measurable matrix-valued function satisfying the standard uniform ellipticity conditions
\begin{equation}
\label{eq:intro-matrix}
\sup_{x \in \rn} |A(x)| \le \Lambda, \quad \inf_{x, \xi \in \rn} \xi \cdot A(x) \xi \ge \lambda |\xi|^{2}
\end{equation} 
for some $0 < \lambda \le \Lambda < \infty$.

The well-known result of Meyers \cite{Meyers1963} (see also \cite{Meyers1975}) 
shows that a weak solution is necessarily $W^{1,2 + \varepsilon}_{loc}(\Omega)$
regular for some $\varepsilon = \varepsilon(n,\lambda,\Lambda) > 0$.
This is most conveniently formulated as the validity of a scaling invariant reverse H\"older inequality of the gradient:
For all cubes $Q$ (which we assume to be axis parallel from this point on),
it holds  
\begin{equation}
\label{eq:intro-RHI}
\left( \sint_{Q} |\nabla u (x)|^{q} \, dx  \right)^{1/q}
\le C  \left( \sint_{2Q} |\nabla u (x)|^{2} \, dx  \right)^{1/2}
\end{equation}
for $q = 2 + \varepsilon$ and $C = C(n,\lambda,\Lambda)$;
$2Q$ is, as are all the dilations in this paper, concentric.
Values $q > 2$ even larger are admissible under additional smoothness assumptions on the matrix function $A$ in \eqref{eq:intro-elliptic-equation}.
For instance,
we can set $q = \infty$ if the coefficient matrix $A$ is smooth (see e.g.\ \cite{MR2597943}), H\"older continuous \cite{MR749677} or just Dini continuous \cite{MR2900466,MR3747493}.
An assumption that $A$ is of vanishing mean oscillation is enough for obtaining \eqref{eq:intro-RHI} for all $q < \infty$ \cite{MR1405255, MR1720770}.
The same goes for matrices $A$ with a small BMO norm \cite{MR2069724}.

In \cite{MR4794496},
it was shown that the validity of \eqref{eq:intro-RHI} implies the so-called $(2,q')$ sparse bounds for the gradient of the solution. The sparse bounds, in turn, are known to imply $L^{p}$ estimates with Muckenhoupt weights with a good estimate on the dependency on the $A_{p}$ characteristic of the weight \cite{MR3531367}.
In that sense,
they are an improvement over what is known as weighted Calder\'on--Zygmund estimates.
In the present paper,
we develop further the method of \cite{MR4794496} method,
namely,
\begin{itemize}
  \item we present a new iteration argument similar to that in \cite{MR4794496} to prove (instead of applying it) \eqref{eq:intro-RHI} using results on equations with only constant coefficients as the input, see Theorem \ref{theorem:duality-clean-bmo};
  \item we use the same iteration argument to prove 
\begin{equation}
\label{eq:intro-RHI-hoelder}
|\nabla u|_{C^{\alpha}(Q)}
\le C(n,A)  \left( \sint_{2Q} |\nabla u (x)|^{2} \, dx  \right)^{1/2}
\end{equation}  
for solutions to \eqref{eq:intro-elliptic-equation} when the matrix $A$ is $\alpha$-H\"older continuous; again using only results on equations with constant coefficients as the input, see Theorem \ref{theorem:duality-clean} and Corollary \ref{cor:holder-noncons-holder}, and the Hardy space theory.
\end{itemize} 
We also show that the sparse form argument from \cite{MR4794496} is flexible enough to include $C^{\alpha}$ theory.
Altogether,
this paper together with \cite{MR4794496} present a unified approach that allows one to deduce both Calder\'on--Zygmund estimates and Schauder estimates under essentially minimal smoothness hypotheses on the coefficients at once,
with no PDE background except for the theory for constant coefficient equations that can be found in textbooks such as \cite{MR2597943} and \cite{MR3887613}.
This approach is by no means simpler than what is commonly known, 
but the interest of our results lies in the further extension of the sparse iteration method
and the connection between Schauder theory and the theory of local Hardy spaces 
that we have not found elsewhere in the literature.


To state the sparse form estimate,
we first have to define sparse families.
\begin{definition}
\label{def:sparse}
Let $\varepsilon \in (0,1)$.
A family of cubes $\mathcal{G}$ is $\varepsilon$-sparse if for each $P \in \mathcal{G}$ there exists $E_P \subset P$ with $|E_P| \ge \varepsilon |P|$
so that 
\[
\sum_{P \in \mathcal{G}} 1_{P} \le 1.
\]
\end{definition}
In addition, we refer to Section \ref{sec:hardy-spaces} for background on the local Hardy-norms.
Then the sparse estimate relevant for the Schauder theory is the following.

\begin{theorem}
\label{intro-thm:sparse-schauder}
Let $\varepsilon \in (0,1)$.
Let $0 < \lambda \le \Lambda< \infty$. Let $\alpha \in (0,1)$
and set $p = n/(n + \alpha)$.
Let $Q$ be a cube and let $A \in C^{\alpha}(6Q;\R^{n \times n})$
satisfy \eqref{eq:intro-matrix} in $6Q$.
Let $F \in C^{\infty}(6Q;\rn)$.
Assume that $u \in W_0^{1,2}(6Q)$ satisfies for all test functions $\eta \in C^{\infty}_c(6Q)$
\[
\int A(x) \nabla u(x) \cdot \nabla \eta(x) \, dx = \int F(x) \cdot \nabla \eta(x) \, dx.
\] 

Then,
given $g \in C^{\infty}(6Q)$
and $\varphi_Q \in C^{\infty}_c(2Q)$ with 
\begin{equation} 
\label{eq:schauder-sparse}
|\partial^{\gamma} \varphi_Q| \le C_{\gamma}\ell(P)^{-|\gamma|} 
\end{equation}
for all $\gamma \in \N^{n}$,
there exists an $\varepsilon$-sparse family $\mathcal{G}$ of subcubes of $2Q$
such that  
\begin{multline*}
\left \lvert \int_{2Q} \varphi_{Q}(x) \nabla u(x) \cdot g(x) \, dx \right \rvert \\
\le   C \sum_{P \in \mathcal{G}}  |P| \left( \sint_{6P} |F(x) - \la{F}_{6P}|^{2} \, dx \right)^{1/2} \no{g}_{h_r^{p}(4P)}  .
\end{multline*} 
Here, $C = C(n,p,\lambda, \Lambda, \ell(Q)^{\alpha}|A|_{C^{\alpha}(6Q)},\varepsilon, \varphi_Q)$
is increasing in the fifth variable, and $h_r^p(4P)$ denotes the local Hardy space as defined in Definition \ref{def:local-hardy}. 
\end{theorem}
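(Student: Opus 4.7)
The plan is to run an iteration that applies the constant-coefficient version, Theorem \ref{theorem:duality-clean}, on a sparse family $\mathcal{G}$ of dyadic subcubes of $2Q$ produced by a Calder\'on--Zygmund-type stopping time. The H\"older continuity of $A$ supplies the geometric smallness $\ell(P)^{\alpha}|A|_{C^{\alpha}(6Q)}$ that drives both the stopping and the convergence of the iteration, and the local Hardy--H\"older duality supplies the dual pairing on the clean step.

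\textbf{Frozen-coefficient step.} For a cube $P \subset 2Q$ set $\bar A_P := \la{A}_{6P}$ and rewrite the equation on $6P$ as the constant-coefficient problem
\[
-\dive(\bar A_P \nabla u) = -\dive\bigl[(F - \la{F}_{6P}) + (\bar A_P - A)\nabla u\bigr].
\]
Applying Theorem \ref{theorem:duality-clean} with test $\varphi_P$ and data $g$, together with the pointwise bound $\|A-\bar A_P\|_{L^\infty(6P)} \lesssim \ell(P)^\alpha |A|_{C^\alpha}$, should yield a one-step inequality of the form
\[
\left| \int \varphi_P \nabla u \cdot g \right|
\le C|P|\bigl[ a(P) + b(P) \bigr] \no{g}_{h_r^{p}(4P)},
\]
where $a(P) = (\sint_{6P}|F-\la{F}_{6P}|^{2})^{1/2}$ is the good term that will survive into the final sparse sum, and $b(P) = \ell(P)^{\alpha}|A|_{C^{\alpha}} (\sint_{6P}|\nabla u|^{2})^{1/2}$ is the perturbative remainder produced by the frozen-coefficient commutator.

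\textbf{Stopping and iteration.} Choose $M = M(n,\lambda,\Lambda,\varepsilon)$ large and let $\{P_i\} \subset P$ be the maximal dyadic subcubes on which $a(P_i) + b(P_i) > M\,[a(P)+b(P)]$; a weak-$L^1$ argument yields $|\bigcup P_i| \le (1-\varepsilon)|P|$, producing the required sparsity upon setting $E_P := P\setminus \bigcup P_i$. A smooth partition of unity $\varphi_P = \varphi_P 1_{P\setminus\bigcup P_i} + \sum_i \varphi_{P_i}$, with each $\varphi_{P_i}$ still obeying \eqref{eq:schauder-sparse}, together with an analogous decomposition of $g$ into $h_r^{p}$-atoms subordinated to $\{P_i\}$, reduces the estimate on each $P_i$ to a fresh instance of the one-step inequality at the smaller scale. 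The Meyers reverse H\"older inequality is then used to absorb the troublesome $b(P)$ into a quantity of the form $a(P')$ on slightly larger cubes, so that iterating the stopping construction produces only $a$-contributions in the telescoping sum.

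\textbf{Main obstacle.} The delicate point is the splitting of $g$ at each iteration stage: because $p<1$, the norm $\no{g}_{h_r^{p}(4P)}$ is not subadditive with respect to multiplication by smooth cut-offs, so the na\"ive splitting $g = \sum g\eta_{P_i}$ loses the Hardy control. One must instead use an atomic or molecular decomposition of $g$ adapted to the stopping-time family $\mathcal{G}$, matched to the $h_r^{p}$--$C^{\alpha}$ duality underlying the clean bound of Theorem \ref{theorem:duality-clean}, so that on every scale the constant-coefficient estimate applies uniformly. This is precisely where the local Hardy-space theory of Section \ref{sec:hardy-spaces} enters the argument and determines the exponent $p = n/(n+\alpha)$.
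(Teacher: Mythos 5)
Your proposal takes a genuinely different route from the paper, and the route has two gaps that would be hard to close. The paper (via Lemma~\ref{lemma:iterable-sparse-schauder}) does not freeze coefficients and does not produce a commutator remainder $(\bar A_P - A)\nabla u$. Instead, on each stopping cube it decomposes the \emph{solution}: $u = u_P + w_P$, where $u_P$ solves the homogeneous \emph{variable}-coefficient equation $-\dive A\nabla u_P = 0$ in $3P$ with $u_P - u \in W_0^{1,2}(3P)$, and $w_P := u - u_P \in W_0^{1,2}(3P)$ solves $-\dive A\nabla w_P = \dive F$. The piece with $u_P$ is handled by pairing the already-established gradient H\"older estimate for homogeneous equations with H\"older coefficients (Corollary~\ref{cor:holder-noncons-holder}) against $g$ via Hardy--H\"older duality; the piece with $w_P$ is what gets iterated, and the $F$-dependence surfaces from the elementary energy bound $\no{\nabla w_P}_{L^{2}} \lesssim \no{F - \la{F}}_{L^{2}}$ available because $w_P$ has zero trace. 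By contrast, your commutator term $b(P) = \ell(P)^{\alpha}|A|_{C^{\alpha}}(\sint_{6P}|\nabla u|^{2})^{1/2}$ still involves $\nabla u$, and there is no reason Meyers' reverse H\"older converts it into an expression involving only $F$ --- Meyers improves integrability of $\nabla u$ but never trades $\nabla u$ for $F$ unless you also have zero boundary data, which you lack at an arbitrary stopping cube.

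The second gap concerns the stopping time and the splitting of $g$. The paper's stopping time is run on \emph{two} dyadic maximal functions simultaneously: $S_1$, a local maximal function of $\varphi_Q|\nabla u|^2$, and $S_2(x)=\sup_{P\ni x}\no{g}_{h_r^p(2P)}$, a local maximal function of the localized Hardy norm of $g$. Including $S_2$ in the stopping is precisely what makes it unnecessary to decompose $g$: the Whitney geometry of the bad set gives a nearby good cube at comparable scale, whence $\no{g}_{h_r^p(2P)}\lesssim\no{g}_{h_r^p(4Q)}$ for each stopping cube $P$, and $g$ is simply re-used unchanged at the next scale. Your proposed criterion $a(P_i)+b(P_i) > M[a(P)+b(P)]$ stops only on the PDE data and omits the control on $g$; the atomic/molecular decomposition of $g$ that you flag as the ``main obstacle'' is then a real obstruction for your scheme, but it is one the paper simply does not encounter.
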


We refer to Corollary \ref{cor:schauder-from-sparse} on
how Theorem \ref{intro-thm:sparse-schauder} implies traditional $C^{1,\beta}$ bounds for all $\beta < \alpha$.
These H\"older bounds also show that the exponent $p$ in the Hardy norm cannot be lowered. A lower exponent would imply $C^{1,\alpha+\varepsilon}$ estimates for equations with $C^{\alpha}$ coefficients, something that is known to be false (see Remark \ref{remark:sharpness}).
For oscillation spaces such as $C^{\alpha}$ and BMO,
the exponent $2$ of the mean oscillations of $F$ is irrelevant. 
However,
we suspect that using more refined tools such as weak type $(1,1)$ or Hardy space space estimates as opposed to $L^{2}$ estimates, one could lower that exponent.
We do not, however,
pursue that (equally interesting) problem here.

Finally,
the reader not so familiar with local Hardy spaces 
may appreciate the simplified yet less efficient version of our argument for reverse H\"older inequalities given in Section \ref{sec:bmo-case}.
There,
we use a variant of the scheme leading to Schauder estimates as above
but replacing the Hardy--H\"older duality by a plain application of H\"older's inequality.
Such an argument is strong enough to give reverse H\"older inequality \eqref{eq:intro-RHI} at small scales for equations whose coefficients are uniformly continuous,
but it is not good enough to provide $\bmo$, $L^{\infty}$ or  H\"older bounds.

\subsection*{Comparison with the literature}
The by-now classical argument that is commonly used for proving Calder\'on--Zygmund estimates for various equations goes back to \cite{MR1486629}.
In that paper,
an argument using a good-lambda argument is given.
Also in that case, \eqref{eq:intro-RHI} plays a crucial role. 
Coupled with a clever application of Chebyshev's inequality,
it gives a gain that is needed to run the good-lambda argument.
To make a comparison,
the arguments here and in \cite{MR4794496} replace the measure theoretic inequality (Chebyshev) by a functional analytic one (H\"older's inequality or Hardy--H\"older duality). 
This approach will allow us to treat a variety of function spaces under (most probably) minimal coefficient regularity following ideas of \cite{MR3625108,MR4058547}.
On the other hand, 
for the time being,
the approach based on duality pairing seems to have very limited applicability in the realm of non-linear equations,
where measure theoretic methods are very efficient, see e.g.\ \cite{MR4359452} for Schauder estimates on inhomogeneous but constant coefficient systems and \cite{MR4665778} for Schauder estimates on variational problems with highly irregular coefficients.
One more approach to a variety of regularity estimates
is the way of potential estimates,
see \cite{MR2900466} for a somewhat complete set of results for a number of equations (including the ones here) when the right hand side is a measure.
Finally,
the reader interested in the classical approach to Schauder estimates can consult Chapter 6 in \cite{MR1814364}.

We also mention \cite{MR3897969},
where the authors derive $(1,1)$ sparse estimates for global Leray projectors using Riesz transforms associated to operators. 
They also study sparse estimates involving $L^{p}$ averages (for any $p >0$)
of certain operator-adapted square functions,
from which global operator-adapted Hardy space bounds follow (Remark 7.7 in \cite{MR3897969}).
However, the abstract assumptions in \cite{MR3897969} on the differential operators (injective, sectorial and maximal accretive on $L^{2}$; admitting holomorphic $H^{\infty}$ functional calculus on $L^{2}$; generating a semigroup subject to Davies--Gaffney $L^{2}-L^{2}$ off-diagonal estimates and having a kernel representation with pointwise Gaussian bounds with constants uniform in $t$) do not include divergence form elliptic operators with H\"older continuous coefficients (see Remark 4.24 in \cite{MR1600066} for failure of uniform Gaussian estimates).
We also remark that the coincidence of the operator adapted Hardy spaces with the usual ones is a subtle question, especially for $p \in (0,1)$ 
(see Theorem 6.1 in \cite{MR2395177} for some concrete positive cases and section 9 in \cite{MR4628043} for a more general and complete discussion).  

When it comes to local Hardy spaces in general \cite{MR1253178,MR1223705,MR447953,MR523600},
there is a vast literature (not admitting a complete review here),
including conditions on boundedness of Calder\'on--Zygmund operators \cite{MR4475438,MR4758469},
div-curl lemmas of various kinds \cite{MR2515406,MR3342492}; 
applications to partial differential equations and much more.
As the works closest to our topic,
we mention 
the estimates for the second derivative for solutions to non-divergence form equations \cite{MR2216900};
the equivalence of H\"older type estimates on kernels and 
Hardy bounds for resolvents in the theory of block form elliptic systems \cite{MR4628043},
and the results on maximal regularity in \cite{MR2384540}.

\bigskip

\noindent \textbf{Acknowledgment.}
Olli Saari is supported by the Spanish State Research Agency MCIN/AEI/10.13039/501100011033, Next Generation EU and by ERDF “A way of making Europe” through the grants RYC2021-032950-I, PID2021-123903NB-I00 and the Severo Ochoa and Maria de Maeztu Program for Centers and Units of Excellence in R\&D, grant number CEX2020-001084-M. Yuanhong Wei is supported by the National Natural Science Foundation of China (Grant No. 12571120, 12271508),  and Scientific Research Project of Education Department of Jilin Province.
We thank Fr\'ed\'eric Bernicot and Moritz Egert for helpful explanations about \cite{MR3897969}.

\section{Notational conventions}
If not otherwise stated, 
constants $C$ are allowed to depend on the parameters
as specified in the statement of the theorem in the proof of which they appear.
For inequalities involving such constants,
like 
\[
a \le Cb, \quad b \le Ca, \quad \frac{1}{C} a \le b \le C a
\]
we occasionally use notations
\[
a \lesssim b , \quad b \lesssim a , \quad a \sim b.
\]
Given a matrix $A \in \mathbb{R}^{n \times n}$, we denote its transpose by $A^T$ and the set of its singular values by $\sigma(A)$.
For $E \subset \rn$ a measurable set,
we denote its Lebesgue measure by $\ab{E}$.
Also, for $f \in L^{1}_{loc}(\rn)$,
we denote 
\[
 \frac{1}{|E|} \int_{E} f(x) \, dx = f_{E} = \la{f}_{E} = \sint_{E} |f(x)| \, dx, 
\]
in function which notation suits best the local typography.
We denote for $x \in \rn$
\[
\ab{x}_{\infty} = \max_{1 \le i \le n} |x_i|, \quad \dist_{\infty}(x,E) = \inf\{\ab{x-y}: y \in E\}.
\] 

For cubes,
we write $Q(x,r) = \{ y \in \rn: \ab{x-y}_{\infty} < r \}$
and we write $NQ(x,r):= Q(x,Nr)$ whenever $N > 0$.
Moreover,
we set $c(Q(x,r))=x$ and $\ell(Q(x,r)) = 2r$.
If $Q$ is a cube,
its dyadic subcubes or cubes dyadic with respect to it are $P\subset Q$ such that there exists $N > 0$ and $x$ such that if
\[
Q \in \{ r 2^{k}((0,1)^{n} + j ) + x: j\in \Z^n, k \in \Z  \}
\]
then 
\[
P \in \{ r 2^{k}((0,1)^{n} + j ) + x: j\in \Z^n, k \in \Z  \}.
\]
The Hardy--Littlewood maximal function of a locally integrable function is 
\[
Mf(x) := \sup_{z \in \rn: \ r > 0} \ 1_{Q(z,r)}(x) \sint_{Q(z,r)}|f(y)|\, dy.
\] 
We will use that this operator is bounded $L^{p}(\rn) \to L^{p}(\rn)$ for $p\in (1,\infty)$
with norm bounded by $C(n,p)$
and that there exists $C(n) > 0$ such that for all $\lambda > 0$
\[
|\{x \in \rn: Mf(x) > \lambda \}| \le \frac{ C(n)}{\lambda} \int_{\rn} |f(x)|\,dx
\]
for all $f \in L^{1}_{loc}(\rn)$.

\section{Local Hardy spaces}
\label{sec:hardy-spaces}
Let $\phi: \R^{n} \to \R^{n}$ be the standard mollifier defined through 
\[
\phi(x) =  c_{n} 1_{Q(0,1)}(x) \prod_{i=1}^{n} e^{-\frac{1}{1-|x_i|^{2}}}
\] 
where $c_{n}$ is a constant guaranteeing $\no{\phi}_{L^{1}(\rn)} = 1$.
We denote $\phi_{s}(x) = s^{-n} \phi(x/s)$ so that $\phi_s$ is a smooth function with $\supp \phi_s = \overline{Q(0,s)}$ and the family $\{\phi_s: s > 0\}$ is an approximation to the identity as $s \to 0$.
The local smooth maximal operator $\mathcal{M}_s$ is defined by setting 
\[
\mathcal{M}_s f(x) = \sup_{0 < r < s/2} | \phi_r * f(x)|
\]
whenever $f$ is a distribution.
We define the local Hardy spaces following Goldberg \cite{MR523600}
and Chang--Krantz--Stein \cite{MR1223705}. 

\begin{definition}
\label{def:local-hardy-unit-scale}
Let $p \in (0,1]$.
For a distribution $f$,
we define 
\[
\no{f}_{h^{p}(\rn)} = \no{ \mathcal{M}_2 f}_{L^{p}(\rn)}.
\]
We define the local Hardy space as the class of distributions for which this norm is finite, that is,
\[
h^{p}(\rn) = \{f: \no{f}_{h^{p}(\rn)} < \infty \}. 
\] 
We define 
\[
h_r^{p}(Q(0,1)) = \{f \rvert_{Q(0,1)}:  f \in h^{p}(\rn) \}  
\]
and  
\begin{equation}
\label{eq:def-local-hardy}
\no{f}_{h_r^{p}(Q(0,1))} := \inf_{ \tilde{f} \in L^{p}(\rn), \tilde{f}\rvert_{Q(0,1)} = f}  \no{ \mathcal{M}_2 \tilde{f}}_{L^{p}(\rn)}.
\end{equation}  
In addition,
we define the local Hardy space with vanishing trace as  
\[
h_z^{p}(Q(0,1)) = \{f \in h^{p}(\rn) : \ f = 1_{Q(0,1)}f \}  
\]
and 
\begin{equation}
\label{eq:def-local-hardy-van}
\no{f}_{h_z^{p}(Q(0,1))} := \no{1_{Q(0,1)} f}_{h^{p}(\rn)} .
\end{equation} 
\end{definition}


\begin{definition}
\label{def:holder-space}
Let $\alpha \in [0,1)$.
Let $f \in L_{loc}^{2}(Q(0,1))$.
We define 
\begin{multline*}
\no{f}_{\Lambda_{z}^{\alpha}(Q(0,1))} \\
= \sup_{\substack{z \in Q(0,1) \\ 0< 4r < \dist_{\infty}(z,\partial Q(0,1)) }} \left( \inf_{c \in \R} \frac{1}{r^{2\alpha}} \sint_{Q(z,r)} |f(y) - c|^{2} \, dy \right)^{1/2}  \\
+ \sup_{\substack{z \in Q(0,1) \\  2r < \dist_{\infty}(z,\partial Q(0,1))< 4r }} \left(  \frac{1}{r^{2\alpha}} \sint_{Q(z,r)} |f(y)|^{2} \, dy \right)^{1/2}
\end{multline*}
and 
\[
\Lambda_{z}^{\alpha}(Q(0,1)) = \{f \in L^{2}_{loc}(Q(0,1)) :  \no{f}_{\Lambda_{z}^{\alpha}(Q(0,1))} < \infty \}.
\]
We also define 
\[
\no{f}_{\Lambda_{r}^{\alpha}(Q(0,1))} 
= \sup_{\substack{z \in Q(0,1) \\ 0< r < \dist_{\infty}(z,\partial Q(0,1)) }} \left( \inf_{c \in \R} \frac{1}{r^{2\alpha}} \sint_{Q(z,r)} |f(y) - c|^{2} \, dy \right)^{1/2}
\]
and
\[
\Lambda_{r}^{\alpha}(Q(0,1)) = \{f \in L^{2}_{loc}(Q(0,1)) :  \no{f}_{\Lambda_{r}^{\alpha}(Q(0,1))} < \infty \}.
\]
\end{definition}

  The spaces $\Lambda_{z}^{\alpha}(Q(0,1))$ and $\Lambda_{r}^{\alpha}(Q(0,1))$ are spaces of H\"older continuous functions.
We write the definition based on $L^{2}_{loc}(\rn)$ hypothesis at the background,
but due to Campanato's theorem, the definition immediately implies H\"older continuity of order $\alpha$.
Moreover,
the second term in the $\Lambda_z^{\alpha}$ norm forces the functions to vanish at the boundary of {$Q(0,1)$},
and it is clear by inspection that 
\[
1_{\{\trm{$f = 0$ at $\partial Q(0,1)$}\}}(f) \no{f}_{\Lambda_{z}^{\alpha}(Q(0,1))} +  \no{f}_{\Lambda_r^{\alpha}(Q(0,1))}  \le c_{n,\alpha} |f|_{C^{\alpha}(Q(0,1))}.
\] 

The following theorem is due to Chang \cite{MR1253178} (Theorem 2.1).
It builds on the atomic decomposition from \cite{MR1223705}.

\begin{theorem} 
\label{thm:duality-unit-scale}
{Let $p  \in (n/(n+1),1]$ and $\alpha = n(1/p-1)$.}
Let $a \in \{z,r\}$ and $b \in \{z,r\} \setminus \{a\}$.
\begin{itemize}
  \item If $L : h_{a}^{p}(Q(0,1)) \to \R$ is a bounded linear functional,
  then there exists $g \in \Lambda_{b}^{\alpha}(Q(0,1))$ such that for all $f \in L^{2}(Q(0,1))$ it holds 
\[
L f = \int g(y) f(y) \, dy 
\]
and $\no{g}_{\Lambda_{b}^{\alpha}(Q(0,1))} \sim \no{L}_{h_{a}^{p}(Q(0,1)) \to \R} $.
Here the implicity constants only depend on $p$ and $n$.
\item If $g \in \Lambda_{b}^{\alpha}(Q(0,1))$, then for all $f \in L^{2}(Q(0,1))$
it holds 
\[
\left \lvert \int g(y) f(y) \, dy \right \rvert \le c_{n,p}  \no{g}_{\Lambda_{b}^{\alpha}(Q(0,1))}\no{f}_{h_a^{p}(Q(0,1))}.
\] 
\end{itemize}
\end{theorem}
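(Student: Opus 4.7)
The plan is to deduce both parts from (i) the $(p,2)$-atomic decomposition of the local Hardy spaces $h^{p}_{a}(Q(0,1))$ from the Chang--Krantz--Stein theory, and (ii) the Campanato-type characterization of $\Lambda^{\alpha}_{b}(Q(0,1))$ already encoded in Definition \ref{def:holder-space}. The key observation is that atoms come in two flavours in the local theory: \emph{small} atoms $a$ supported in a cube $R$ with $\ell(R) \ll 1$, satisfying a moment condition $\int a = 0$ and the $L^2$-size bound $\no{a}_{L^2} \le |R|^{1/2-1/p}$; and \emph{large} atoms supported in cubes with $\ell(R) \sim 1$, where no moment condition is required. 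In the restriction setting $h^{p}_{r}$ the large atoms may straddle $\partial Q(0,1)$, while in the vanishing-trace setting $h^{p}_{z}$ the atoms are forced to sit inside $Q(0,1)$.

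For the second bullet (the easy direction), given $g \in \Lambda^{\alpha}_{b}(Q(0,1))$ and $f \in L^{2}(Q(0,1))$, I would take an extension of $f$ realizing (up to a factor) its $h^{p}_{a}$-norm, write $f = \sum_j \lambda_j a_j$ atomically with $\sum_j |\lambda_j|^{p} \lesssim \|f\|_{h^{p}_{a}}^{p}$, and handle each atom separately. For small atoms I choose a constant $c_j$ minimizing the Campanato mean on $R_j$ and estimate, using $\alpha = n(1/p-1)$,
\[
\Big|\int a_j\, g\, dy\Big| = \Big|\int a_j(g - c_j)\,dy\Big| \le \no{a_j}_{L^{2}} \,\Big(\int_{R_j}|g-c_j|^{2}\Big)^{1/2} \lesssim \no{g}_{\Lambda^{\alpha}_{b}},
\]
where the powers of $|R_j|$ cancel exactly. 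For atoms near $\partial Q(0,1)$ (present in the $h^{p}_{r}$ case but not in $h^{p}_{z}$), the vanishing-trace clause built into $\Lambda^{\alpha}_{z}$ in Definition \ref{def:holder-space} supplies the same estimate without invoking a moment condition, which is precisely why the dualities pair $r\leftrightarrow z$ and $z\leftrightarrow r$. Summing and using $\sum|\lambda_j|\le (\sum|\lambda_j|^{p})^{1/p}$ (valid since $p\le 1$) yields the claim.

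For the first bullet (the hard direction), I would first note that $L^{2}(Q(0,1))\hookrightarrow h^{p}_{a}(Q(0,1))$ continuously, because the smooth maximal operator $\mathcal{M}_2$ is bounded on $L^{2}(\mathbb{R}^{n})$ and Hölder's inequality then controls the $L^{p}$-norm on a bounded set by the $L^{2}$-norm. Hence the restriction $L\rvert_{L^{2}}$ is a bounded linear functional, and Riesz representation produces $g \in L^{2}(Q(0,1))$ with $Lf = \int g f$. To upgrade $g$ to $\Lambda^{\alpha}_{b}$ with the correct norm, I test against rescaled atomic bumps: for each admissible cube $R \subset Q(0,1)$ and each $h \in L^{2}(R)$ with $\no{h}_{L^{2}(R)} \le 1$, the normalized function $|R|^{1/p-1/2-1}(h - h_{R})1_{R}$ is (up to an absolute constant) an atom in the appropriate class, so
\[
\Big|\int_{R}g(h - h_R)\,dy\Big| \lesssim |R|^{1+1/2-1/p} \no{L} = |R|^{\alpha/n + 1/2}\no{L}.
\]
Taking the supremum over $h$ gives the Campanato inequality that is precisely the first term in $\no{g}_{\Lambda^{\alpha}_{b}}$. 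For cubes $R$ close to $\partial Q(0,1)$ in the case $b = z$, I instead test with $h 1_{R}$ (no mean subtraction), which is still an atom in $h^{p}_{r}$ because no moment condition is needed for straddling atoms, recovering the second supremum in $\no{g}_{\Lambda^{\alpha}_{z}}$.

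The main obstacle, I expect, is exactly this bookkeeping at the boundary: keeping straight which atoms are allowed in $h^{p}_{z}$ versus $h^{p}_{r}$, and matching them with the ``free'' versus ``vanishing-trace'' Campanato terms in the two Hölder norms, so that the dualities come out as the crossed pairing $(r,z)$ and $(z,r)$ rather than the diagonal one. This is the genuinely \emph{local} content of Chang's theorem beyond Goldberg's Hardy space theory on $\R^n$, and any proof must argue carefully why moment cancellation is required inside $Q(0,1)$ but optional across its boundary.
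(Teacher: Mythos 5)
The paper does not actually prove Theorem~\ref{thm:duality-unit-scale}: it is quoted as Theorem~2.1 of Chang \cite{MR1253178}, which in turn rests on the Chang--Krantz--Stein atomic decomposition \cite{MR1223705}. Your sketch reconstructs essentially that route --- atomic decomposition of $h_a^p$, the Campanato form of $\Lambda_b^\alpha$ from Definition~\ref{def:holder-space}, Riesz representation on $L^2$, and testing against mean-zero $L^2$-bumps --- and you correctly isolate the structural reason the pairing is crossed, namely that atoms near $\partial Q(0,1)$ either carry or are exempted from the moment condition, matching the ``free'' versus ``vanishing'' branches of the two H\"older norms.

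There is, however, an arithmetic slip in the hard direction that, taken literally, makes the argument give a useless bound. For $h \in L^2(R)$ with $\no{h}_{L^2(R)} \le 1$ one has $\no{(h-h_R)1_R}_{L^2(R)} \le 2$, and a $(p,2)$-atom supported in $R$ must satisfy $\no{a}_{L^2} \le |R|^{1/2-1/p}$; so the correct normalizing factor is $|R|^{1/2-1/p}$, not $|R|^{1/p-1/2-1}$. Your factor gives $\lvert \int_R g\,(h-h_R) \rvert \lesssim |R|^{3/2-1/p}\no{L}$, and the asserted identity $|R|^{3/2-1/p}=|R|^{\alpha/n+1/2}$ is false for every $p<1$ (since $\alpha/n+1/2 = 1/p-1/2$). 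Pushed through the Campanato comparison, this exponent yields
\[
\inf_{c}\, r^{-\alpha}\left(\sint_{R}\ab{g-c}^2\right)^{1/2} \lesssim r^{-2\alpha}\no{L},
\]
which blows up as $r \to 0$. With the corrected normalization one instead gets $\lvert \int_R g\,(h-h_R) \rvert \lesssim |R|^{1/p-1/2}\no{L} = |R|^{\alpha/n+1/2}\no{L}$, which is exactly what the cancellation in the Campanato quantity requires and happens to agree with the (correct) right-hand side you wrote down. The rest of the sketch, including the easy direction where the exponents do cancel as you claim, is fine.
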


Next we state the scaled versions of the definitions of local Hardy spaces and the duality theorem.
The point here is to make the constants appearing in the theorems independent of the domain.
For this purpose,
we define Hardy spaces on polytopes as affine pull-backs of Hardy spaces in the unit cube.
For the special case of affine mappings preserving cubes,
we reserve the following notation.
Given a point $x \in \rn$ and a scale $s > 0$,
we define 
\[
\delta_{x,s}(y) = s(y-x)
\]
so that 
\[
\delta_{x,s}(Q(0,1)) = Q(x,s). 
\] 

\begin{definition}\label{def:local-hardy}
Let $p \in (0,1]$.
Let $A$ be an invertible affine mapping $\rn \to \rn$.
Let $a \in \{z,r\}$.
Then we say $f \in h_a^{p}(AQ(0,1))$ if $f \circ A^{-1} \in h_a^{p}(Q(0,1))$
and we define 
\[
\no{f}_{h_a^{p}(AQ(0,1))} = \no{f \circ A^{-1}}_{h_a^{p}(Q(0,1))}.
\]
Similarly,
we say $f \in \Lambda_a^{p}(AQ(0,1))$ if $f \circ A^{-1} \in \Lambda_a^{p}(Q(0,1))$
and we define 
\[
\no{f}_{\Lambda_a^{p}(AQ(0,1))} = \no{f \circ A^{-1}}_{\Lambda_a^{p}(Q(0,1))}.
\]
\end{definition}
By change of variable and the definition \eqref{eq:def-local-hardy}
we see that for $a \in \{z,r\}$
\begin{align} 
\label{eq:def-local-hardy-true-rest}
\no{f}_{h_r^{p}(Q(x,s))} &= \inf_{ \tilde{f} \in L^{p}(\rn), \tilde{f}\rvert_{Q(x,s)} = f} \left( \frac{1}{s^{n}} \int  \mathcal{M}_{s}\tilde{f}(y) ^{p} \, dy \right)^{1/p}, \\
\label{eq:def-local-hardy-true-zero}
\no{f}_{h_z^{p}(Q(x,s))} &=   \left( \frac{1}{s^{n}} \int  \mathcal{M}_{s}(1_{Q(x,s)} f)(y) ^{p} \, dy \right)^{1/p}.
\end{align} 
Similarly,
by a change of variable,
we can state the following corollary of Chang's theorem.
\begin{corollary}
\label{cor:duality-local-hardy}
Let $A$ be an invertible affine mapping.
Let $0<\lambda< \Lambda$ be the smallest and largest of the singular values of its linear part.
Let $Q = AQ(0,1)$.
Let $p  \in (n/(n+1),1]$ and $\alpha = n(1/p-1)$.
Let $a \in \{z,r\}$ and $b \in \{z,r\} \setminus \{a\}$.
\begin{itemize}
  \item If $L : h_{a}^{p}(Q) \to \R$ is a bounded linear functional,
  then there exists $g \in \Lambda_{b}^{\alpha}(Q)$ such that for all $f \in L^{2}(Q)$ 
it holds 
\[
L f = \sint_{Q} g(y) f(y) \, dy 
\]
and $\diam(Q)^{\alpha} \no{g}_{\Lambda_{b}^{\alpha}(Q)} \sim \no{L}_{h_{a}^{p}(Q) \to \R} $.
Here the implicit constants only depend on $p$, $n$, $\lambda$ and $\Lambda$.
\item If $g \in \Lambda_{b}^{\alpha}(Q)$, then for all $f \in L^{2}(Q)$ 
it holds 
\[
\left \lvert \sint_{Q} g(y) f(y) \, dy \right \rvert \le c_{n,p,\lambda,\Lambda}  \diam(Q)^{\alpha} \no{g}_{\Lambda_{b}^{\alpha}(Q)}\no{f}_{h_{a}^{p}(Q)}.
\] 
\end{itemize}
\end{corollary}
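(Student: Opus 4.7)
The plan is to reduce everything to the unit cube case and then invoke Theorem \ref{thm:duality-unit-scale}. The whole statement is essentially bookkeeping of how the three relevant quantities, the $L^{2}$-pairing $\sint_{Q} gf$, the $\Lambda_{b}^{\alpha}$-norm, and the $h_{a}^{p}$-norm, transform under the affine rescaling $\delta_{x,s}: Q(0,1) \to Q(x,s)$ that underlies Definition \ref{def:local-hardy}. Writing $Q = Q(x,s)$ and using tildes for objects pulled back to $Q(0,1)$ via $\delta_{x,s}$, i.e.\ $\tilde{f} = f \circ \delta_{x,s}$ and $\tilde{g} = g \circ \delta_{x,s}$, the three scaling rules I would establish first are: by Definition \ref{def:local-hardy}, $\no{f}_{h_{a}^{p}(Q)} = \no{\tilde{f}}_{h_{a}^{p}(Q(0,1))}$; by a direct change of variable in the defining suprema of Definition \ref{def:holder-space}, $\no{\tilde{g}}_{\Lambda_{b}^{\alpha}(Q(0,1))} \sim s^{\alpha} \no{g}_{\Lambda_{b}^{\alpha}(Q)} \sim \ell(Q)^{\alpha} \no{g}_{\Lambda_{b}^{\alpha}(Q)}$; and by scaling Lebesgue measure, $\sint_{Q} g(y)f(y)\,dy = \sint_{Q(0,1)} \tilde{g}(y) \tilde{f}(y) \, dy$.

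For the first bullet, given a bounded linear $L: h_{a}^{p}(Q) \to \R$, I define $\tilde{L}: h_{a}^{p}(Q(0,1)) \to \R$ by $\tilde{L}(\tilde{f}) = L(\tilde{f} \circ \delta_{x,s}^{-1})$. The identity $\no{f}_{h_{a}^{p}(Q)} = \no{\tilde{f}}_{h_{a}^{p}(Q(0,1))}$ gives $\no{\tilde{L}}_{h_{a}^{p}(Q(0,1)) \to \R} = \no{L}_{h_{a}^{p}(Q) \to \R}$. Theorem \ref{thm:duality-unit-scale} then produces $\tilde{g} \in \Lambda_{b}^{\alpha}(Q(0,1))$ representing $\tilde{L}$ against $L^{2}(Q(0,1))$ with $\no{\tilde{g}}_{\Lambda_{b}^{\alpha}(Q(0,1))} \sim \no{\tilde{L}}$. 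I set $g = \tilde{g} \circ \delta_{x,s}^{-1}$ and check that for any $f \in L^{2}(Q)$, the three scaling rules combine to give $Lf = \sint_{Q} g f$ (up to a dimensional constant absorbed in the average normalization), and also the norm equivalence $\ell(Q)^{\alpha} \no{g}_{\Lambda_{b}^{\alpha}(Q)} \sim \no{L}$.

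For the second bullet, the pairing estimate from Theorem \ref{thm:duality-unit-scale} applied to $\tilde{f} = f \circ \delta_{x,s}$ and $\tilde{g} = g \circ \delta_{x,s}$ yields
\[
\left \lvert \sint_{Q(0,1)} \tilde{g}(y) \tilde{f}(y) \, dy \right \rvert \le c_{n,p} \no{\tilde{g}}_{\Lambda_{b}^{\alpha}(Q(0,1))} \no{\tilde{f}}_{h_{a}^{p}(Q(0,1))}.
\]
Translating both sides using the three scaling rules turns this into the claimed estimate with the factor $\ell(Q)^{\alpha}$ showing up on the right.

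I do not expect any genuine obstacle: Theorem \ref{thm:duality-unit-scale} already does all the analytic work and the remaining content is purely algebraic. The only point that requires care is that Definition \ref{def:local-hardy} is stated so that the $h_{a}^{p}$-norm is scale invariant, whereas the $\Lambda_{b}^{\alpha}$-norm, read from Definition \ref{def:holder-space}, is not; the mismatch in homogeneity is precisely what forces the factor $\ell(Q)^{\alpha}$ (and the replacement of $\int$ by $\sint$ on the pairing side) that appears in the corollary as opposed to Theorem \ref{thm:duality-unit-scale}. Verifying that these two adjustments match up correctly is the only slightly delicate step.
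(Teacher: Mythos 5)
Your proposal is correct and follows exactly the route the paper has in mind: the paper prefaces the corollary with ``Similarly, by a change of variable, we can state the following corollary of Chang's theorem'' and gives no further detail, so the rescaling bookkeeping you spell out (scale invariance of the $h_a^p$ norm from Definition \ref{def:local-hardy}, the factor $s^\alpha \sim \ell(Q)^\alpha$ picked up by the $\Lambda_b^\alpha$ norm, and the matching of $\sint_Q$ with $\sint_{Q(0,1)}$ under the dilation) is precisely the intended proof. The only point you handle slightly loosely, and correctly flag, is the harmless factor $|Q(0,1)| = 2^n$ arising because Theorem \ref{thm:duality-unit-scale} uses $\int$ while the corollary uses $\sint$; this gets absorbed into $c_{n,p}$ and the implicit constants.
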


We will also need the grand maximal function characterization of the local Hardy spaces. 
For that, and also other purposes,
we set a notation for normalized bump functions.
\begin{definition}
\label{def:normalized-bumps}
Let $Q$ be a cube.
We define 
\[
\mathcal{A}_{Q} = \{ \varphi \in C^{\infty}_c (Q): \sum_{|\gamma| \le N_0} |\partial^{\gamma} \varphi| \le \ell(Q)^{-|\gamma|}  \}
\]
for a value $N_0 = N_0(n,p)$ which will remain fixed for the paper.
This is the number of derivatives required for the grand maximal function characterization of both global and local Hardy spaces, as stated in Theorem 11 of \cite{MR447953} (and thus in Theorem 1 of \cite{MR523600}).
\end{definition}

We define the local grand maximal function
\[
\mathcal{M}_{s,*} f(x) := \sup_{t \in (0,s/2)} \sup_{\varphi \in \mathcal{A}_{Q(0,1)}} |\varphi_{t} * f(x)|.
\]
By a change of variables,
Theorem 1 in \cite{MR523600} yields the following result.
\begin{lemma}
\label{lemma:grand-function} 
Let $Q$ be a cube, $p  \in (n/(n+1),1]$, and let $f \in h^{p}(\rn)$.
Then
\[
\int \mathcal{M}_{\ell(Q)} f(x)^{p} \, dx \sim \int \mathcal{M}_{\ell(Q),*} f(x)^{p} \, dx
\]
with the implicit constant only depending on $n$ and $p$.
\end{lemma}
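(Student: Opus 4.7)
The plan is to reduce the statement to the unit-scale equivalence between the local smooth maximal function and the local grand maximal function given by Theorem 1 in \cite{MR523600}, via a straightforward change of variables, as the authors themselves anticipate in the sentence preceding the lemma. In Definition \ref{def:normalized-bumps}, the integer $N_0 = N_0(n,p)$ was fixed precisely so that this unit-scale equivalence $\int \mathcal{M}_2 g(x)^p \, dx \sim \int \mathcal{M}_{2,*} g(x)^p \, dx$ holds for every $g \in h^p(\rn)$, with implicit constants depending only on $n$ and $p$. So the only real work is to transfer this equivalence from scale $2$ to scale $\ell(Q)$.

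To do so, set $\lambda = \ell(Q)/2$, let $c$ denote the center of $Q$, and introduce the rescaled distribution $\tilde{f}(w) := f(\lambda w + c)$. Changing variables $y = \lambda z + c$ in the defining convolution integral and using that $\phi_r(\lambda u) = \lambda^{-n} \phi_{r/\lambda}(u)$, I get
\[
\phi_r * f(\lambda w + c) = \phi_{r/\lambda} * \tilde{f}(w),
\]
and the identical computation gives $\varphi_t * f(\lambda w + c) = \varphi_{t/\lambda} * \tilde{f}(w)$ for every $\varphi \in \mathcal{A}_{Q(0,1)}$, because $\mathcal{A}_{Q(0,1)}$ is defined with respect to the ambient unit scale and is therefore unaffected by the rescaling $\delta_{c,\lambda}$. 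As $r$, respectively $t$, runs over $(0,\lambda)$, the rescaled parameters $r/\lambda$ and $t/\lambda$ cover exactly the range $(0,1)$ that appears in the unit-scale suprema, so taking suprema yields the pointwise identities
\[
\mathcal{M}_{\ell(Q)} f(\lambda w + c) = \mathcal{M}_2 \tilde{f}(w), \qquad \mathcal{M}_{\ell(Q),*} f(\lambda w + c) = \mathcal{M}_{2,*} \tilde{f}(w).
\]

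Next, I change variables $x = \lambda w + c$ in each of $\int \mathcal{M}_{\ell(Q)} f(x)^p \, dx$ and $\int \mathcal{M}_{\ell(Q),*} f(x)^p \, dx$, picking up a factor $\lambda^n$ on both sides which will cancel, and apply the unit-scale Goldberg equivalence to $\tilde{f}$ to conclude with constants depending only on $n$ and $p$. The whole argument is bookkeeping: the one point deserving care is to confirm that $\tilde{f}$ still lies in $h^p(\rn)$, which is clear from the same change-of-variables computation applied to $\mathcal{M}_2 \tilde{f}$, and that the class of bump functions used on either side of the rescaling is literally the same fixed class $\mathcal{A}_{Q(0,1)}$. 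Beyond tracking which scale parameter matches which, I do not anticipate any genuine obstacle.
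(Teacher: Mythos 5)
Your proof is correct and is exactly the approach the paper intends: the paper gives only the one-line remark that the lemma follows from Goldberg's Theorem~1 ``by a change of variables,'' and you have simply written out that change of variables (the dilation $\delta_{c,\lambda}$ with $\lambda=\ell(Q)/2$, the pointwise identities $\mathcal{M}_{\ell(Q)}f(\lambda w+c)=\mathcal{M}_2\tilde f(w)$ and $\mathcal{M}_{\ell(Q),*}f(\lambda w+c)=\mathcal{M}_{2,*}\tilde f(w)$, and the cancellation of the Jacobian). One small aside: the remark that $\tilde f\in h^p(\rn)$ is not actually needed, since Goldberg's equivalence holds at the level of an a~priori two-sided estimate for tempered distributions; the displayed integral identities transfer the equivalence directly without any finiteness hypothesis.
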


Finally,
we will need a variant of a special case of Theorem 4 in \cite{MR523600}
assuring that multiplication by a cut-off induces a bounded operator on local Hardy spaces.

\begin{lemma}
\label{lemma:cut-off-hardy}  
Let $Q$ be a cube and $p  \in (n/(n+1),1]$.
Let $\psi \in \mathcal{A}_{Q}$.

If $f \in h_r^{p}(Q)$
and $\tilde{f} \in L^{p}(\rn)$ satisfies $\tilde{f} = f$ in $Q$,
then
\begin{align*}
\no{\psi f}_{h_z^{p}(Q)} &\le C(n,p)  \no{\mathcal{M}_{\ell(Q),*}(1_{Q}f)}_{L^{p}(\rn)}, \\
\no{\psi f}_{h_r^{p}(Q)} &\le C(n,p)  \no{\mathcal{M}_{\ell(Q),*} \tilde{f}}_{L^{p}(\rn)} . 
\end{align*}
In particular, multiplication by $\psi$ is a bounded operator both in $h_{r}^{p}(Q)$ and in $h_{z}^{p}(Q)$ with norm bounded by a constant only depending on $p$ and $n$.
\end{lemma}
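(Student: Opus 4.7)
The plan is to reduce both inequalities to a single pointwise bound on the local grand maximal function: for every $x \in \rn$ and every locally integrable function $g$,
\[
\mathcal{M}_{\ell(Q),*}(\psi g)(x) \le C(n,p)\, \mathcal{M}_{\ell(Q),*} g(x). \qquad (\star)
\]
Granted $(\star)$, the first stated inequality follows by choosing $g = 1_{Q} f$ (so that $\psi g = \psi f$, since $\supp \psi \subset Q$), taking $L^{p}(\rn)$-norms, and invoking Lemma \ref{lemma:grand-function} on the left to pass from $\mathcal{M}_{\ell(Q)}$ to $\mathcal{M}_{\ell(Q),*}$. The second inequality follows analogously with $g = \tilde f$: the product $\psi \tilde f$ agrees with $\psi f$ on $Q$, so it is an admissible candidate in the infimum \eqref{eq:def-local-hardy-true-rest} defining $\no{\psi f}_{h_r^{p}(Q)}$, and one finishes again via Lemma \ref{lemma:grand-function}.

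To establish $(\star)$, the key idea is to absorb the cut-off $\psi$ into a modified test function. Fix $x \in \rn$, $t \in (0, \ell(Q)/2)$, and $\varphi \in \mathcal{A}_{Q(0,1)}$, and set
\[
\eta^{(x)}(z) := \varphi(z)\, \psi(x - tz).
\]
A direct change of variables $y = x - tz$ yields the pointwise identity
\[
\varphi_t * (\psi g)(x) = \int \varphi_t(x-y)\, \psi(y)\, g(y)\, dy = \eta^{(x)}_t * g(x).
\]
The function $\eta^{(x)}$ is supported in $\supp \varphi \subset Q(0,1)$, and the Leibniz rule combined with the bounds $|\partial^{\beta}\varphi| \le 1$ for $|\beta| \le N_0$ (from $\varphi \in \mathcal{A}_{Q(0,1)}$) and $|\partial^{\alpha}\psi| \le \ell(Q)^{-|\alpha|}$ for $|\alpha| \le N_0$ (from $\psi \in \mathcal{A}_{Q}$) yields
\[
|\partial^{\gamma}\eta^{(x)}(z)| \le \sum_{\beta \le \gamma} \binom{\gamma}{\beta}\, t^{|\gamma - \beta|}\, \ell(Q)^{-|\gamma - \beta|} \le 2^{|\gamma|} \le 2^{N_0},
\]
where the second inequality uses the scale restriction $t \le \ell(Q)/2$ to guarantee every ratio $t/\ell(Q) \le 1/2$. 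Hence $\eta^{(x)}/C(n,p)$ belongs to $\mathcal{A}_{Q(0,1)}$ for a suitable $C(n,p)$, and therefore $|\varphi_t * (\psi g)(x)| = |\eta^{(x)}_t * g(x)| \le C(n,p)\, \mathcal{M}_{\ell(Q),*} g(x)$. Taking the supremum over $\varphi$ and $t$ produces $(\star)$.

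The hard part is precisely the Leibniz estimate on $\eta^{(x)}$: the absorption of $\psi$ succeeds only because the local-scale restriction $t < \ell(Q)/2$ lets the derivative cost $t^{-|\gamma|}$ coming from $\varphi_t$ dominate the cost $\ell(Q)^{-|\gamma|}$ coming from $\psi$, so that the product behaves like an admissible bump at the finer scale $t$ rather than at the coarser scale $\ell(Q)$. Without this restriction $\psi$ could not be absorbed and the argument would collapse; it is exactly the role played by the scale parameter in the definitions of $\mathcal{M}_{s}$ and $\mathcal{M}_{s,*}$ that makes the grand-maximal characterisation compatible with the cut-off. The \emph{in particular} statement that multiplication by $\psi$ is bounded on $h_r^{p}(Q)$ and $h_z^{p}(Q)$ with constant depending only on $n$ and $p$ then follows from the two displayed inequalities by taking the infimum over the extensions $\tilde f$ in the $h_r^{p}$-estimate and by recognising, through Lemma \ref{lemma:grand-function} together with \eqref{eq:def-local-hardy-true-rest}--\eqref{eq:def-local-hardy-true-zero}, that the right-hand sides are equivalent (up to constants depending only on $n,p$, after rescaling to the unit cube) to the corresponding norms of $f$.
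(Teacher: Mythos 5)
Your proof is correct and follows essentially the same route as the paper: the paper sets $\tilde{\psi}_{x,s}(z)=\psi(x-sz)$ and absorbs the cut-off into the convolution kernel, observing that $\varphi\cdot\tilde{\psi}_{x,s}$ has the derivative bounds of a normalized bump in $\mathcal{A}_{Q(0,1)}$ precisely because $s<\ell(Q)/2$, and then passes to the grand maximal function and Lemma \ref{lemma:grand-function}; your $\eta^{(x)}$ is exactly this product, with the Leibniz-rule estimate spelled out a bit more explicitly. The only cosmetic difference is that you isolate the pointwise inequality $(\star)$ as an intermediate step, whereas the paper folds it directly into the chain of $L^p$ bounds.
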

\begin{proof}
As $\psi \in \mathcal{A}_Q$,
we have that $\tilde{\psi}_{x_0,s}$ defined through 
\[
\tilde{\psi}_{x_0,s}(x) = \psi(x_0-s x)
\]
has the derivative bounds as a function in $\mathcal{A}_{Q(0,1)}$ for all $x_0 \in \rn$ and $s \in (0, \ell(Q)/2)$.
Hence for all $\varphi \in \mathcal{A}_{Q(0,1)}$ and $s < \ell(Q)/2$,
we have 
\begin{multline}
\label{eq:proof-cut-off-lemma}
|\varphi_{s} * (\psi \tilde{f}) (x)|
= \left \lvert \int \frac{1}{s^{n}} \varphi\left(\frac{x-y}{s}\right) \psi(y) \tilde{f}(y) \, dy \right \rvert \\
= \left \lvert \int \frac{1}{s^{n}} \varphi\left(\frac{x-y}{s}\right) \tilde{\psi}_{x,s} \left(\frac{x-y}{s} \right) \tilde{f}(y) \, dy \right \rvert \\
\le C \sup_{\tilde{\varphi} \in \mathcal{A}_{Q(0,1)}} | \tilde{\varphi}_{s}*\tilde{f}(x)|. 
\end{multline}
By definition
\[
\no{\psi f}_{h_r^{p}(Q)}^{p}
\le \ell(Q)^{-n} \int_{\rn}  \mathcal{M}_{\ell(Q)}( \psi \tilde{f})(x)^{p} \, dx
\]
and consequently 
\[
\no{\psi f}_{h_r^{p}(Q)}^{p} \le C \ell(Q)^{-n} \int_{\rn} \mathcal{M}_{\ell(Q),*}\tilde{f}(x)^{p} \,dx .
\]
The operator norm bound follows by applying Lemma \ref{lemma:grand-function} and taking infimum over all extensions $\tilde{f}$.
Similarly, setting $\tilde{f} =  1_{Q(x,s)}f$,
we see that 
\[
\no{\psi f}_{h_z^{p}(Q)}^{p} \le C \ell(Q)^{-n} \int_{\rn} \mathcal{M}_{\ell(Q),*}(1_{Q}f(x))^{p} \,dx.
\]
\end{proof}

\section{Preliminaries on constant coefficient equations}
   
We start by a Schauder estimate for constant coefficient equations.
For a matrix $A \in \R^{n\times n}$ with smallest singular value $\lambda > 0$ and largest singular value $\Lambda < \infty$,
we consider its symmetric part $\tilde{A} = (A+A^{T})/2$ whose eigenvalues also lie in $[\lambda,\Lambda]$. 
Consider the square root $\tilde{A}^{1/2}$ of $\tilde{A}$,
and define  the dilation factors
\[
k_{\lambda,\Lambda} :=3 \sqrt{ \frac{ n}{\lambda}}  \ge \inf \{ \delta > 0: Q(0,3) \subset  \tilde{A}^{1/2}Q(0,\delta)\}
\]
and 
\[
K_{\lambda,\Lambda} := 3\sqrt{\frac{ n \Lambda }{ \lambda}} \ge \sup \{ |x|_{\infty} :  x \in \tilde{A}^{1/2}Q(0,k_{\lambda,\Lambda}) \}.
\]
For a generic axis parallel cube $Q$,
we let $S_{A}(Q)$ be a translate of $\tilde{A}^{1/2}k_{\lambda,\Lambda} Q $ that contains $3Q$.
For future reference, we also notice that we have 
\begin{equation}
\label{cube-containing-polytope}
K_{\lambda,\Lambda} Q \supset S_A(Q).
\end{equation}

For any smooth vector field $F$ in $\tilde{A}^{1/2}Q$, 
the solutions $u$ to  
\[
-\dive A \nabla u = \dive F
\] 
in $\tilde{A}^{1/2}Q$ can be seen to satisfy 
\[
-\Delta (u \circ \tilde{A}^{1/2}) = \dive \tilde{F}
\]
in $Q$, where 
$\tilde{F}(x) =  (\tilde{A}^{1/2})^{-1} F(\tilde{A}^{1/2}x)$.
In particular, 
if $u \in W^{1,2}_0(AQ)$, $x \in \partial \tilde{A}^{1/2}Q$ and $F = 0$ in $B(x,2r) \cap \overline{\tilde{A}^{1/2}Q}$ for some $r > 0$,
then $u \circ A$ has a harmonic extension $\tilde{u}$ to a neighborhood of $(\tilde{A}^{1/2})^{-1}x$ by odd reflections. Consequently, boundary regularity estimates for $u$ in $B(x,r)$ follow from interior regularity estimates for $\tilde{u}$.
Now,
passing to the Schauder estimate below follows along the lines of Theorem 10.1 in CVGMT version of the lecture notes \cite{MR3887613}. 

\begin{lemma}
\label{lemma:hoelder-constant} 
Let $A_0$ be a (constant) matrix with smallest singular value $\lambda > 0$ and largest singular value $\Lambda$.
Let $Q$ be a cube and let $u \in W_0^{1,2}(S_{A_0}(Q))$ be a weak solution to 
\[
- \dive A_0 \nabla u(x) = \dive F(x)
\]
in $S_{A_0}(Q)$ for $F \in C^{2}(S_{A_0}(Q);\rn)$.

Let $\alpha \in (0,1)$.
Then for all $x,y \in S_{A_0}(Q)$
\[
|\nabla u(x) - \nabla u (y)| 
\le C  |x-y|  ^{\alpha}    \ab{F}_{C^{\alpha}(S_{A_0}(Q);\rn)}
\]
where $C = C(n,\alpha,\lambda,\Lambda)$.
\end{lemma} 

In addition to the global Schauder estimate,
we will need a local H\"older estimate for the gradient.
This is a straigthforward consequence of, say, Lipschitz estimate for solutions,
as derivatives of solutions to constant coefficient equations are solutions also themselves.
The lemma below hence follows, for instance, from Theorem 2 in Section 6.3 of \cite{MR2597943}.

\begin{lemma}
\label{lemma:hoelder-constant-no-RHS} 
Let $A_0$ be a (constant) matrix with smallest singular value $\lambda > 0$ and largest singular value $\Lambda$. 
Let $Q$ be a cube.
Let $u \in W^{1,2}(3Q)$ be a (local) weak solution to 
\[
- \dive A_0 \nabla u(x) = 0.
\] 

Let $\alpha \in [0,1)$.
Then for all $x,y \in Q$
\[
|\nabla u(x) - \nabla u (y)| 
\le C   \left( \frac{|x-y|}{\ell(Q)}\right)^{\alpha} \left( \sint_{2Q} |\nabla u(x)|^{2} \, dx \right)^{1/2}
\]
where $C = C(n,\alpha,\lambda,\Lambda)$.
\end{lemma}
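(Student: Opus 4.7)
The plan is to exploit the fact that for the constant-coefficient operator $-\dive A_0 \nabla$, any partial derivative of a weak solution is again a weak solution of the same equation. Applying interior $L^{\infty}$ estimates to the first and second derivatives of $u$ then produces pointwise bounds on $\nabla u$ and $D^{2}u$, and the desired H\"older control will follow by interpolating between these two bounds.

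I would begin by reducing to the case where $Q$ is a cube. The hypothesis $\diam(Q)^{n} \le c |Q|$ forces $Q$ to have bounded aspect ratio, so a diagonal linear change of variables converts $Q$ into a cube while transforming $A_0$ into another constant elliptic matrix whose ellipticity constants depend only on $A_0$ and $c$. Having reduced to a cube, the classical interior regularity theorem for constant-coefficient elliptic equations (Section 6.3, Theorem 2 in \cite{MR2597943}, together with Caccioppoli and Sobolev embedding) shows that any weak solution $v$ of $-\dive A_0 \nabla v = 0$ on $3Q$ is smooth in the interior of $3Q$ and satisfies, for each multi-index $\gamma$,
\[
\sup_{Q} |\partial^{\gamma} v|
\le \frac{C}{\ell(Q)^{|\gamma|}} \left( \sint_{2Q} |v|^{2} \, dx \right)^{1/2},
\]
with $C = C(n, A_0, c, |\gamma|)$. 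Since the coefficients are constant, each $\partial_k u$ is again a weak solution of the same equation on a slightly shrunk domain (which can be absorbed by enlarging $c$ if necessary), so applying the above to $v = \partial_k u$ with $|\gamma| \in \{0,1\}$ produces
\[
\sup_{Q} |\nabla u| + \ell(Q) \sup_{Q} |D^{2} u|
\le C \left( \sint_{2Q} |\nabla u|^{2} \, dx \right)^{1/2}.
\]

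From these two pointwise bounds the H\"older estimate is elementary. For $x, y \in Q$, the fundamental theorem of calculus combined with the $D^{2} u$ bound yields the Lipschitz estimate
\[
|\nabla u(x) - \nabla u(y)|
\le C \frac{|x-y|}{\ell(Q)} \left( \sint_{2Q} |\nabla u|^{2} \, dx \right)^{1/2},
\]
while the $L^{\infty}$ bound on $\nabla u$ directly gives $|\nabla u(x) - \nabla u(y)| \le 2 \sup_{Q} |\nabla u|$. Applying the elementary inequality $\min(a,b) \le a^{\alpha} b^{1-\alpha}$ for any $\alpha \in [0,1)$ then interpolates between the two and produces the desired estimate.

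The argument is essentially routine, so no single step is a genuine obstacle. The only items requiring attention are the differentiation of the PDE in the weak sense, which is standard for constant coefficients (for instance via difference quotients), and the bookkeeping of how the domain shrinks by a fixed factor each time the interior estimate is applied. Because $\alpha$ is strictly less than one, no endpoint issues arise, and all constants collapse into a single $C = C(n, \alpha, A_0, c)$.
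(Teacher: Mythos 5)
Your proof is correct and follows essentially the same route the paper indicates: derivatives of constant-coefficient solutions are again solutions, so interior $L^{\infty}$/Lipschitz estimates applied to $\nabla u$ give the bound, with the H\"older exponent obtained by interpolation. The paper treats this as a known textbook fact (citing Theorem 2 in Section 6.3 of Evans) and gives only a one-sentence sketch; the one small simplification worth noting is that since $|x-y|/\ell(Q)$ is bounded by a dimensional constant for $x,y \in Q$ after rescaling to a cube, the Lipschitz bound alone already yields the $\alpha$-H\"older estimate without invoking $\min(a,b)\le a^{\alpha}b^{1-\alpha}$, though your interpolation is of course equally valid.
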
 

Dual to the global H\"older estimate from Lemma \ref{lemma:hoelder-constant},
we have a local Hardy space estimate. 

\begin{lemma}
\label{lemma:hardy-constant} 
Let $A_0$ be a (constant) matrix with smallest singular value $\lambda > 0$ and largest singular value $\Lambda$.
Let $p \in (n/(n+1),1)$ and $\alpha = n(1/p-1)$.
Let $Q$ be a cube, $\Omega = S_{A_0}(Q)$ and let $u \in W_0^{1,2}(\Omega)$ be a weak solution to 
\[
- \dive A_0 \nabla u  = \dive B F 
\]
for $F \in L^{2}(\Omega;\rn)$ and $B \in C^{\alpha}(\Omega;\R^{n \times n})$ with $\alpha \in (0,1)$. 

Then it holds 
\[
\no{\nabla u}_{h_{z}^{p}(\Omega;\rn)} \le C (\no{B^{T}}_{L^{\infty}(\Omega;\mathbb{R}^{n \times n})} + \diam(\Omega)^{\alpha} \ab{B^{T}}_{C^{\alpha}(\Omega;\mathbb{R}^{n \times n})}) \no{F}_{h_{z}^{p}(\Omega;\rn)}
\] 
where $C = C(n,p,A_0)$.
If in addition, $\supp B \subset \Omega$,
then 
\[
\no{\nabla u}_{h_{r}^{p}(\Omega;\rn)} \le C (\no{B^{T}}_{L^{\infty}(\Omega;\mathbb{R}^{n \times n})} + \diam(\Omega)^{\alpha} \ab{B^{T}}_{C^{\alpha}(\Omega;\mathbb{R}^{n \times n})}) \no{F}_{h_{r}^{p}(\Omega;\rn)}.
\] 
\end{lemma}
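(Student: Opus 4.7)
The plan is to deduce the lemma from its dual---namely the Schauder estimate of Lemma~\ref{lemma:hoelder-constant}---via the Hardy--H\"older duality recorded in Corollary~\ref{cor:duality-local-hardy}. Since the dual of $h_z^p(Q)$ is identified with $\Lambda_r^\alpha(Q)$ up to the scaling factor $\ell(Q)^\alpha$, testing against smooth test vector fields will encode the $h_z^p$-norm of $\nabla u$, provided one knows $\nabla u \in h_z^p(Q)$ to begin with; this is ensured by first regularizing $F$, working with smooth compactly supported approximants in $Q$, and passing to the limit at the end by means of the quantitative bound we prove.

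Accordingly, fix $g\in C^\infty(\overline Q;\R^n)$ and let $v\in W_0^{1,2}(Q)$ solve the constant-coefficient adjoint problem
$-\dive(A_0^T\nabla v)=-\dive g$ in $Q$.
Testing the equation of $u$ against $v$ and the equation of $v$ against $u$, and using the identity $A_0\xi\cdot\eta=\xi\cdot A_0^T\eta$, yields the Green identity
\[
\int_Q g\cdot \nabla u\,dx \;=\; -\int_Q F\cdot B^T\nabla v\,dx.
\]
Corollary~\ref{cor:duality-local-hardy} applied to the pairing on the right gives
\[
\left|\sint_Q g\cdot \nabla u\,dx\right|\le c_{n,p}\,\ell(Q)^\alpha\, \|B^T\nabla v\|_{\Lambda_r^\alpha(Q)}\, \|F\|_{h_z^p(Q)}.
\]

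The heart of the proof is therefore the product estimate
\[
\|B^T\nabla v\|_{\Lambda_r^\alpha(Q)}\le C\bigl(\|B^T\|_{L^\infty(Q)}+\ell(Q)^\alpha |B^T|_{C^\alpha(Q)}\bigr)\|g\|_{\Lambda_r^\alpha(Q)}.
\]
Lemma~\ref{lemma:hoelder-constant} applied to $v$, combined with Campanato's theorem to pass between the $\Lambda_r^\alpha$ and $C^\alpha$ seminorms, yields $|\nabla v|_{C^\alpha(Q)}\le C\|g\|_{\Lambda_r^\alpha(Q)}$. The Dirichlet condition $v=0$ on $\partial Q$ implies by integration by parts that $\int_Q\nabla v\,dx=0$, which combined with H\"older continuity gives
\[
\|\nabla v\|_{L^\infty(Q)}\le C\ell(Q)^\alpha |\nabla v|_{C^\alpha(Q)}.
\]
Plugging the Leibniz splitting $B^T(y)\nabla v(y)-B^T(z)\nabla v(z)=(B^T(y)-B^T(z))\nabla v(y)+B^T(z)(\nabla v(y)-\nabla v(z))$ into the Campanato-type definition of $\|\cdot\|_{\Lambda_r^\alpha(Q)}$ then delivers the product bound with the displayed coefficient dependence.

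The main obstacle is precisely this product estimate: the $\Lambda_r^\alpha$ seminorm carries only oscillation information about $g$, so the uniform bound on $\nabla v$ must be harvested from the Dirichlet condition of $v$ rather than from the size of $g$. For the second statement, concerning the $h_r^p$ bound under $\supp B\subset Q$, the scheme is identical but one tests against $g\in\Lambda_z^\alpha(Q)$; the additional boundary term in $\|\cdot\|_{\Lambda_z^\alpha}$ is harmless because $B^T\nabla v$ vanishes near $\partial Q$ by the support hypothesis on $B$, so the product estimate carries over verbatim.
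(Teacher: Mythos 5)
Your proposal is correct and follows essentially the same duality argument as the paper: solve the adjoint constant-coefficient problem, apply Hardy--H\"older duality, and close with the Leibniz-rule product estimate for $B^T\nabla w$ in the Campanato norm. The only minor variation is in bounding $\|\nabla w\|_{L^\infty(Q)}$: you use the vanishing mean of $\nabla w$ forced by the Dirichlet condition, while the paper instead replaces $g$ by $g-\langle g\rangle_Q$ and invokes the energy estimate; both routes are equally valid.
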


\begin{proof}
We argue by duality.
Let $a \in \{z,r\}$ and $b \in \{z,r\} \setminus \{a\}$.
Let $\alpha = n(1/p-1)$ and let $g \in \Lambda_{b}^{\alpha}(\Omega;\rn)$.
Then $g \in L^{2}(\Omega;\rn)$ so that $\dive g \in W^{-1,2}(\Omega)$.
Let $w \in W^{1,2}_{0}(\Omega)$ be the solution (which exists, by Lax--Milgram theorem) to 
\[
\dive A^{T}_0\nabla w = \dive g
\]
so that $A^{T}_0\nabla w - g$ is divergence free.
Because $u$ vanishes on $\partial \Omega$ in the Sobolev sense,
we can use this and later the equation for $u$ to obtain
\begin{multline*}
\left \lvert \int_{\Omega} \nabla u(x) \cdot g(x) \, dx \right \rvert
= \left \lvert \int_{\Omega} A_0\nabla u(x) \cdot \nabla w(x) \, dx \right \rvert \\
= \left \lvert \int_{\Omega} B(x) F(x) \cdot \nabla w(x) \, dx \right \rvert.
\end{multline*}
By Corollary \ref{cor:duality-local-hardy} and Lemma \ref{lemma:hoelder-constant}
the right hand side is bounded by 
\[
C \diam(\Omega)^{\alpha+n} \no{F}_{h_a^{p}(\Omega;\rn)} \no{B^{T} \nabla w}_{\Lambda_{b}^{\alpha}(\Omega;\rn)}. 
\] 
Using the Campanato characterization of H\"older norms
(and the boundary values of $B$ for $a = r$),
we see 
\begin{multline*}
\no{B^{T} \nabla w}_{\Lambda_{b}^{\alpha}(\Omega;\rn)}
\le \ab{B^{T} \nabla w}_{C^{\alpha}(\Omega;\rn)} \\
\le \no{B^{T}}_{L^{\infty}(\Omega;\R^{n \times n})}\ab{\nabla w}_{C^{\alpha}(\Omega;\rn)} + \no{\nabla w}_{L^{\infty}(\Omega;\rn)}\ab{B^{T}}_{C^{\alpha}(\Omega;\R^{n \times n})} .
\end{multline*} 
Because $A_0$ is constant,
$w$ solves also the equation $\dive A_0^{T}\nabla w = \dive (g-\la{g}_{\Omega})$.
Hence 
\begin{multline*}
\no{\nabla w}_{L^{\infty}(\Omega;\rn)}
\le |\la{{|}\nabla w{|}}_{\Omega}| + \diam(\Omega)^{\alpha}\ab{\nabla w}_{C^{\alpha}(\Omega;\rn)} \\
\le \left( \sint_{\Omega}|g(x)-\la{g}_{\Omega}|^{2} \, dx \right)^{1/2}  + \diam(\Omega)^{\alpha}\ab{\nabla w}_{C^{\alpha}(\Omega;\rn)}
\end{multline*}
so that by Lemma \ref{lemma:hoelder-constant}
\begin{multline*}
\no{B^{T} \nabla w}_{\Lambda_{b}^{\alpha}(\Omega;\rn)} \le C(\no{B^{T}}_{L^{\infty}(\Omega;\R^{n \times n})} \\
+ \diam(\Omega)^{\alpha} \ab{B^{T}}_{C^{\alpha}(\Omega;\R^{n \times n})}) \no{g}_{\Lambda_{b}^{\alpha}(\Omega;\rn)}.
\end{multline*}
As $g$ is arbitrary,
the claim follows by Corollary \ref{cor:duality-local-hardy} and a change of variables.
\end{proof}

Finally, for notational convenience, 
we define the projection to divergence free vector fields as follows.
\begin{definition}
\label{def:T-operator}
Let $0<\lambda\leq \Lambda < \infty$.
Let $P$ be a cube and let the smallest singular value of $A \in \R^{n\times n}$ be positive.
For $g \in L^{2}(S_{A}(P))$,
we let $T_{P,A}(g)$ be the unique $W^{1,2}_{0}(S_{A}(P))$ solution to 
\[
 \dive A^{T} \nabla T_{P,A}(g) = \dive g.
\]  
\end{definition}
It follows from the Lax--Milgram theorem that $T_{P,A}$ is well-defined.
All estimates in this section apply to $\nabla T_{P,A}$.

\section{Equations with H\"older continuous coefficients}
\label{sec:hoelder-coefficients} 

The core of the iteration leading to an interior H\"older estimate is the following bound for a duality pairing.
In this section,
we consider a fixed pair of positive ellipticity parameters $0<\lambda < \Lambda$.
We denote by $\mathcal{F}(Q)$ the family of the interiors of half-open cubes 
$P$ that are obtained by partitioning a minimal half open cube containing the open cube $Q$ and that satisfy $|P| = 2^{-n N}|Q|$,
where $N = N(\lambda,\Lambda)$ is large enough so that $K_{\lambda,\Lambda}P \subset (11/10) Q$.  
Also,
recall Definition \ref{def:normalized-bumps} of the bump functions appearing in the statement
and Definition \ref{def:T-operator} of the operator $T$.

\begin{lemma}
\label{lemma:iterable}
Let $0 < \lambda \le \Lambda< \infty$, $K = K_{\lambda,\Lambda}$ and $D \ge 0$. Let $\alpha \in [0,1)$
and set $p = n/(n + \alpha)$.
Let $Q_0$ be a cube; let the measurable function $A : K Q_0 \to \R^{n \times n}$ satisfy $\sigma(A(x)) \subset [\lambda,\Lambda]$ for all $x \in KQ_0$, and let $B \in C^{\alpha}(KQ_0;\R^{n \times n})$
be such that 
\[
\ell(KQ_0)^{-\alpha} \no{B}_{L^{\infty}(KQ_0;\R^{n \times n})} + \ab{B}_{C^{\alpha}(KQ_0;\R^{n \times n})} \le  D.
\]
Assume that $u \in W^{1,2}(KQ_0)$ satisfies for all test functions $\eta \in C^{\infty}_c(KQ_0)$
\[
\int A(x) \nabla u(x) \cdot \nabla \eta(x) \, dx = 0.
\] 

Then,
if $g \in L^{2}(Q_0;\rn)$,
it holds 
\begin{multline*}
\left \lvert \int_{Q_0} B \nabla u  \cdot g\, dx  \right \rvert  
\le C  D |Q_0|^{1+\alpha/n} \left( \sint_{KQ_0} |\nabla u |^{2} \, dx \right)^{1/2} \no{g}_{h_z^{p}(Q_0;\rn)} \\
+ \left \lvert \sum_{P \in \mathcal{F}(Q_0)} \int_{S_{A_P}(P)} (A-A_{P})\nabla u   \cdot 1_{S_{A_P}(P)}\nabla T_{P,A_P}(1_{Q_0}\psi_{P} B^{T} g )\, dx    \right \rvert  
\end{multline*}
where $\psi_{P} \in \mathcal{A}_{2P}$, $C = C(n,\alpha,\lambda,\Lambda)$, and 
\[
A_P = \sint_{3P} A(x) \, dx.
\]
\end{lemma}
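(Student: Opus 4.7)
The plan is to combine a smooth partition of unity on $3Q_0$ with a local decomposition of $u$ into a ``frozen-coefficient homogeneous'' part and its correction, using the Hardy--H\"older duality to absorb the former while producing the summed expression in the statement from the latter. Fix a partition of unity $\{\psi_P\}_{P \in \mathcal F(3Q_0)}$ subordinate to the cover $\{2P\}$, with each $\psi_P \in \mathcal A_{2P}$ and $\sum_P \psi_P \equiv 1$ on $3Q_0$; such a family exists because the $2P$'s cover $3Q_0$ with bounded overlap. On each $3P$, decompose $u = u_{hom}^{(P)} + u_{inh}^{(P)}$, where $u_{hom}^{(P)} \in W^{1,2}(3P)$ solves the constant-coefficient Dirichlet problem $\dive A_P \nabla u_{hom}^{(P)} = 0$ with $u_{hom}^{(P)} - u \in W^{1,2}_0(3P)$, and $u_{inh}^{(P)} := u - u_{hom}^{(P)} \in W^{1,2}_0(3P)$; from $\dive A \nabla u = 0$ on $4Q_0 \supset 3P$ one obtains that $u_{inh}^{(P)}$ weakly satisfies $\dive A_P \nabla u_{inh}^{(P)} = \dive (A_P - A) \nabla u$ on $3P$.

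Setting $w_P := T_{3P, A_P}(\psi_P B^T g\, 1_{3Q_0}) \in W^{1,2}_0(3P)$, both $w_P$ and $u_{inh}^{(P)}$ lie in $W^{1,2}_0(3P)$, so each is admissible as a test function in the weak formulation of the other's equation. Testing $w_P$'s equation against $u_{inh}^{(P)}$ and $u_{inh}^{(P)}$'s equation against $w_P$ produces two expressions for $\int_{3P} A_P \nabla u_{inh}^{(P)} \cdot \nabla w_P \, dx$, which combine into the identity
\[
\int_{3P} \nabla u_{inh}^{(P)} \cdot \psi_P B^T g\, 1_{3Q_0} \, dx = -\int_{3P} (A - A_P)\nabla u \cdot \nabla w_P \, dx.
\]
Decomposing via the partition, $\int_{3Q_0} B \nabla u \cdot g \, dx = \sum_P \int \nabla u_{hom}^{(P)} \cdot \psi_P B^T g\, 1_{3Q_0} \, dx + \sum_P \int \nabla u_{inh}^{(P)} \cdot \psi_P B^T g\, 1_{3Q_0} \, dx$, so the triangle inequality and the identity above already reproduce the second term in the statement (the factor $1_{3P}$ is cosmetic since $\nabla w_P$ is supported in $3P$).

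For the $u_{hom}^{(P)}$-contribution I would apply the Hardy--H\"older duality of Corollary \ref{cor:duality-local-hardy} on $3Q_0$:
\[
\Big| \int (\psi_P B \nabla u_{hom}^{(P)}) \cdot g \, dx \Big| \lesssim |Q_0|\, \ell(Q_0)^\alpha \, \| \psi_P B \nabla u_{hom}^{(P)} \|_{\Lambda_r^\alpha(3Q_0)} \, \|g\|_{h_z^p(3Q_0)}.
\]
Because $\psi_P B \nabla u_{hom}^{(P)}$ is smooth and supported in $2P \subset 3Q_0$, vanishing at $\partial(2P)$, its $\Lambda_r^\alpha(3Q_0)$-norm is controlled by the $C^\alpha(2P)$-seminorm. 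The product rule for H\"older seminorms combined with the interior Schauder estimate of Lemma \ref{lemma:hoelder-constant-no-RHS} applied to the constant-coefficient solution $u_{hom}^{(P)}$ then gives $|\psi_P B \nabla u_{hom}^{(P)}|_{C^\alpha(2P)} \lesssim D \bigl(\sint_{3P} |\nabla u_{hom}^{(P)}|^2 \, dx\bigr)^{1/2}$. The Dirichlet principle replaces $u_{hom}^{(P)}$ by $u$ on the right-hand side, and summing over $P$ via Cauchy--Schwarz and the bounded overlap of $\{3P\}$ delivers the first term.

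The main obstacle I expect is the bookkeeping in the product-rule step: the $L^\infty$-norm of $B$ on $2P$ scales like $D\, \ell(4Q_0)^\alpha$ while $|\psi_P|_{C^\alpha}$ scales like $\ell(2P)^{-\alpha}$, so one must verify that these combine cleanly (the ratios $\ell(4Q_0)/\ell(2P)$ being dimensional constants since $|P| = 3^{-3n}|3Q_0|$) to $D$ times a factor depending only on $n$ and $\alpha$, so that after multiplication by the $3^{3n}$ cubes and the prefactor $|Q_0|^{1+\alpha/n}$ the result depends only on the admissible parameters. A secondary issue is that $\bigcup_P 3P$ is a slight enlargement of $3Q_0$ (still contained in $4Q_0$), so the final summation naturally produces the $L^2$-average on this enlargement rather than on $3Q_0$; this discrepancy must be absorbed into the universal constants of the lemma.
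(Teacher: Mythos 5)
Your proposal is correct and matches the paper's argument in its overall structure: the same partition of unity $\{\psi_P\}$ subordinate to $\{2P\}_{P\in\mathcal F(3Q_0)}$, the same decomposition of $u$ into the frozen-coefficient solution $u_P$ with boundary data $u$ on $3P$ and its correction $u-u_P \in W^{1,2}_0(3P)$, and the same testing argument producing $\int_{3P}(A-A_P)\nabla u\cdot\nabla T_{3P,A_P}(\psi_P B^T g\,1_{3Q_0})$ for the correction term. The one genuine difference is in how you treat the frozen-coefficient contribution. The paper applies the Hardy--H\"older duality of Corollary \ref{cor:duality-local-hardy} at scale $2P$, pairing $\|B\nabla u_P\|_{\Lambda_r^\alpha(2P)}$ with $\|\psi_P g\,1_{3Q_0}\|_{h_z^p(2P)}$, and then invokes the cut-off stability Lemma \ref{lemma:cut-off-hardy} to pass from $h_z^p(2P)$ back to $h_z^p(3Q_0)$. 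You instead keep $\psi_P$ with $B\nabla u_P$ and apply the duality once at the fixed scale $3Q_0$, bounding $\|\psi_P B\nabla u_P\|_{\Lambda_r^\alpha(3Q_0)}$ directly by $|\psi_P B\nabla u_P|_{C^\alpha(2P)}$ via the compact support of $\psi_P$ — this sidesteps Lemma \ref{lemma:cut-off-hardy} entirely within this lemma. Both routes are valid here because $\ell(P)\sim\ell(Q_0)$ by construction, so the dimensional factors $(\ell(Q_0)/\ell(P))^\alpha$ you flag fold into the universal constant; your observation that the sum $\sum_P$ naturally lands on $\sint_{4Q_0}$ rather than $\sint_{3Q_0}$ (since $3P$ may slightly exceed $3Q_0$) is a sharp remark that applies equally to the paper's chain $|3P|^{-1/2}\|\nabla u\|_{L^2(3P)}\lesssim|3Q_0|^{-1/2}\|\nabla u\|_{L^2(3Q_0)}$, and is indeed harmless since the iteration in Theorem \ref{theorem:duality-clean} eventually reverts to $4Q_0$. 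The paper's choice of localizing the Hardy norm to $2P$ is arguably the more robust template — it is exactly the pattern reused in the iteration where $\mathcal{O}_{P,j}g$ lives at genuinely small scales — but as a proof of Lemma \ref{lemma:iterable} alone, your version is a modest and legitimate simplification.
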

 
\begin{proof}
Let $\psi_{Q(0,1)}$ be a smooth function with 
\[
1_{Q(0,1)} \le \psi_{Q(0,1)} \le 1_{Q(0,2)}
\] 
and for a cube $P = P(x_0,r_0)$ let 
\[
\tilde{\psi}_{P}(x) = \psi_{Q(0,1)} \left( \frac{x-x_0}{r_0} \right)
\] 
and 
\[
\psi_{P}(x) := \frac{\tilde{\psi}_P}{\sum_{P \in \mathcal{F}(Q_0)} \tilde{\psi}_{P}} 
\]
so that 
\[
\sum_{P \in \mathcal{F}(Q_0)} \psi_{P} = 1
\]
in $Q_0$ and the functions $\psi_{P}$ satisfy 
\[
0 \le \psi_{P} \le 1, \quad |\partial^{\gamma} \psi_{P}| \le C_{n,\gamma} |Q_0|^{-|\gamma|/n}
\]
for all $\gamma \in \N^{n}$.

Now
\begin{align*}
\bigg \lvert\int_{Q_0} &  B(x)  \nabla u(x) \cdot g(x) \, dx \bigg \rvert \\
&
\le \left \lvert\sum_{P{\in \mathcal{F}(Q_0)}} \int \psi_{P}(x) B(x) \nabla u_{P}(x) \cdot g(x)1_{Q_0}(x) \, dx  \right \rvert \\
&\ + \left \lvert\sum_{P{{\in \mathcal{F}(Q_0)}}} \int \psi_{P}(x) B(x) [\nabla u(x)- \nabla u_{P}(x)] \cdot g(x) 1_{Q_0}(x) \, dx \right \rvert \\ 
&= \I + \II  
\end{align*}
where we define $u_{P} \in u +  W^{1,2}_{0}(S_{A_P}(P))$ as the function solving 
\[
-\dive A_{P} \nabla u_{P} = 0, \qquad A_{P} := \sint_{3P} A(x) \, dx
\]
in the weak sense.

To estimate $\I$,
we apply the Hardy--H\"older duality from Corollary \ref{cor:duality-local-hardy} 
to estimate
\begin{multline}
\label{eq:I-start}
\left \lvert \int \psi_{P}(x) B(x) \nabla u_{P}(x) \cdot g(x)1_{Q_0}(x) \, dx \right \rvert \\
\lesssim  |P|^{1+\alpha/n} \no{ B \nabla u_{P}}_{\Lambda_r^{\alpha}(2P)} \no{\psi_{P} g1_{Q_0}}_{h_{z}^{p}(2P)} .  
\end{multline}
By the assumption on $B$ and 
by Lemma \ref{lemma:hoelder-constant-no-RHS}  
\begin{align*}
\ab{ B }_{C^{\alpha}(2P)}& \lesssim D, \\
\ab{\nabla u_{P}}_{C^{\alpha}(2P)}& \lesssim \frac{1}{\ell(P)^{\alpha}} \left(\sint_{S_{A_P}(P)} |\nabla u_{P}(x)|^{2} {\, dx} \right)^{1/2}   
\end{align*}  
and as 
\[
\no{ \nabla u_{P}}_{L^{\infty}(2P)} \le \no{  \nabla u_{P} - \la{\nabla u_{P}}_{2P}}_{L^{\infty}(2P)} + \left(\sint_{2P} |\nabla u_{P}(x)|^{2} {\, dx} \right)^{1/2}, 
\] 
we also have
\begin{align*}
\no{ B }_{L^{\infty}(2P)}& \lesssim \ell(Q_0)^{\alpha} D ,   \\
\no{  \nabla u_{P}}_{L^{\infty}(2P)}& \lesssim  \left(\sint_{S_{A_P}(P)} |\nabla u_{P}(x)|^{2} {\, dx} \right)^{1/2}   
\end{align*} 
so that all in all  
\begin{equation*}
\ab{ \ell(P)^{\alpha} B \nabla u_{P}}_{C^{\alpha}(2P)}
\lesssim \ell(Q_0)^{\alpha} D |P|^{-1/2} \no{ \nabla u_{P}}_{L^{2}(S_{A_P}(P))}.
\end{equation*}
Here, 
because $u_{P}$ solves $-\dive A_P \nabla u_P = 0$ with boundary values of $u$,
we have
\[
 |P|^{-1/2}\no{ \nabla u_{P}}_{L^{2}(S_{A_P}(P))} 
\lesssim  |P|^{-1/2} \no{ \nabla u}_{L^{2}(S_{A_P}(P))} \lesssim |Q_0|^{-1/2} \no{ \nabla u}_{L^{2}(KQ_0)}. 
\]
Hence the factor with $\nabla u_{P}$ in \eqref{eq:I-start} is bounded by
\[
C D \ell(Q_0)^{\alpha} |Q_0|^{-1/2} \no{ \nabla u}_{L^{2}(KQ_0)}.
\]

To estimate the factor with $g$ in \eqref{eq:I-start},
we use the definition of Hardy space with zero trace,
the fact $\supp \psi_P \subset 2P$,
and Lemma \ref{lemma:cut-off-hardy}.
We see that
\begin{align*}
\no{\psi_P g1_{Q_0}}_{h_{z}^{p}(2P )} 
&= \left( \frac{1}{|2P|}  \int \mathcal{M}_{\ell(P)}( 1_{2P} \psi_P g1_{Q_0} )(x)^{p} \, dx \right)^{1/p} \\
&= \left( \frac{1}{|2P|} \int \mathcal{M}_{\ell(P)}( \psi_P g1_{Q_0} )(x)^{p} \, dx \right)^{1/p} \\
&\le \left( {\frac{|Q_0|}{|2P|}}\right)^{1/p} \no{\psi_P g 1_{Q_0}}_{h_{z}^{p}(Q_0)} 
\le C \no{g}_{h_{z}^{p}(Q_0)}.
\end{align*}  
Hence 
\begin{multline*}
\I \le C D \ell(Q_0)^{\alpha}  |Q_0|^{-1/2} \no{\nabla u}_{L^{2}(KQ_0)} \no{g}_{h_{z}^{p}(Q_0)}
 \sum_{P\in \mathcal{F}(Q_0) } |P| \\
\le C D  \ell(Q_0)^{\alpha} |Q_0|  \left( \sint_{KQ_0}|\nabla u(x)|^{2} \, dx \right)^{1/2} \no{g}_{h_{z}^{p}(Q_0)}
\end{multline*}
which is the desired estimate for $\I$.

We turn the attention to $\II$. 
Denote $w_P = u - u_{P}$ so that $w_P \in W^{1,2}_{0}(S_{A_P}(P))$.
Then 
\begin{multline*}
\int  \psi_{P}(x) B(x) \nabla w_P(x)\cdot g(x)1_{Q_0}(x) \, dx  \\
= \int_{S_{A_P}(P)} \nabla w_P(x) \cdot A_{P}^{T}\nabla T_{P,A_P}(\psi_{P} B^{T}g1_{Q_0})(x)  \, dx  
\end{multline*}
by the definition of $T_{P,A_P}$.
Indeed,
\[
\dive (f - A_{P}^{T} \nabla T_{P,A_P}(f) ) = \dive f - \dive f = 0
\]
holds for all $f \in L^{2}(S_{A_P}(P);\rn)$ as an identity in $W^{-1,2}(S_{A_P}(P))$.
Further,
we know that 
\[
-\dive A_{P} \nabla w_P = - \dive (A_{P}-A) \nabla u 
\]
holds as an identity in $W^{-1,2}(S_{A_P}(P))$
and so by the weak formulation of the equation
\begin{multline*}
\int_{S_{A_P}(P)} \nabla w_P(x) \cdot A_{P}^{T}\nabla T_{P,A_P}(\psi_{P} B^{T}g1_{Q_0})(x)  \, dx  \\
= \int_{S_{A_P}(P)} (A_{P}-A(x)) \nabla u(x) \cdot \nabla T_{P,A_P}(\psi_{P} B^{T}g1_{Q_0})(x)  \, dx  
\end{multline*}
which is the second term on the right hand side of the claimed inequality.
\end{proof}

The lemma above can be iterated to smaller and smaller scales.
This gives an estimate on the duality pairing of $\nabla u$ and a test function,
from which we will be able to infer both H\"older and supremum estimates.
Note the the seminorm of $C^{\alpha}(Q;\R^{n \times n})$ is not dilation invariant,
and hence we see the quantity $\ell(Q)^{\alpha}|A|_{C^{\alpha}(Q;\R^{n \times n})}$ appear in the estimates. That product is dilation invariant.

\begin{theorem}
\label{theorem:duality-clean}
Let $0 < \lambda \le \Lambda< \infty$. Let $\alpha \in (0,1)$
and set $p = n/(n + \alpha)$.
Let $Q_0$ be a cube and let $A \in C^{\alpha}(KQ_0;\R^{n \times n})$
satisfy $\sigma(A(x)) \subset [\lambda,\Lambda]$ for all $x \in KQ_0$.
Assume that $u \in W^{1,2}(KQ_0)$ satisfies for all test functions $\eta \in C^{\infty}_c(KQ_0)$
\[
\int A(x) \nabla u(x) \cdot \nabla \eta(x) \, dx = 0.
\] 

Then,
if $g \in L^{2}(KQ_0;\rn)$
and $\psi_0 \in C^{\infty}_{c}(Q_0)$ satisfies 
\[
0 \le \psi_0 \le 1,
\]
it holds 
\[
\left \lvert \int_{Q_0} \psi_{0}(x)  \nabla u(x) \cdot g(x) \, dx \right \rvert  
\le C |Q_0| \left( \sint_{KQ_0} | \nabla u(x)|^{2} \, dx \right)^{1/2} \no{g}_{h_{z}^{p}(KQ_0;\rn)}
\]
where 
\[
C= C(n,p,\lambda,\Lambda,  \ell(Q_0)^{\alpha}\ab{A}_{C^{\alpha}(KQ_0;\R^{n \times n})} , \psi_0 ).
\] 
\end{theorem}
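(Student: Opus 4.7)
The plan is to iterate Lemma~\ref{lemma:iterable} to arbitrary depth and collect the emerging main terms into a convergent series. I would first apply the lemma to $Q_0$ with $B=\psi_0 I$; its main term is already of the desired form, and the error is a sum over $P\in\mathcal{F}(3Q_0)$ of expressions $\int_{3P}(A-A_P)\nabla u\cdot g_P^{(1)}$ with $g_P^{(1)}=1_{3P}\nabla T_{3P,A_P}(\psi_P\psi_0 g)$. Each of these has exactly the structure to which the lemma reapplies, now with $B=A-A_P$ satisfying $\ell(4P)^{-\alpha}\|A-A_P\|_\infty+|A-A_P|_{C^\alpha(4P)}\lesssim|A|_{C^\alpha(4Q_0)}$ uniformly in $P$. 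Iterating this construction to depth $k$ writes the LHS as $\sum_{j=0}^{k-1}G_j+E_k$, where $G_j$ collects the level-$j$ main terms indexed by cubes $Q^{(j)}$ of side $3^{-3j}\ell(Q_0)$ and $E_k$ is the remaining level-$k$ residual.

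Controlling $G_j$ is the heart of the argument. The Lemma~\ref{lemma:iterable} prefactor $D_j|Q^{(j)}|^{1+\alpha/n}$ is bounded by $|A|_{C^\alpha(4Q_0)}\cdot 3^{-3j\alpha}\ell(Q_0)^\alpha|Q^{(j)}|$ for $j\ge 1$, supplying a geometric decay. The Hardy norm $\|g_{Q^{(j)}}^{(j)}\|_{h_z^p(3Q^{(j)})}$ is the nested output of $j$ operations, each combining a cut-off $\psi_{Q^{(k)}}$, multiplication by the H\"older-continuous matrix $(A-A_{Q^{(k-1)}})^T$, and the action of $\nabla T_{3Q^{(k)},A_{Q^{(k)}}}$. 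Lemma~\ref{lemma:hardy-constant} packages these three ingredients into a single multiplier factor of size $\lesssim \ell(Q^{(k-1)})^\alpha|A|_{C^\alpha(4Q_0)}$ (with the scale comparison done in the spirit of the argument in the proof of Lemma~\ref{lemma:iterable}). Iterating $j$ times yields
\[
\|g_{Q^{(j)}}^{(j)}\|_{h_z^p(3Q^{(j)})}\lesssim C(\psi_0)\bigl(\ell(Q_0)^\alpha|A|_{C^\alpha(4Q_0)}\bigr)^{j-1}3^{-3\alpha j(j-1)/2}\|g\|_{h_z^p(3Q_0)}.
\]
Combining this with Cauchy--Schwarz across $\{Q^{(j)}\}$ and the bounded-overlap estimate $\sum_{Q^{(j)}}\int_{3Q^{(j)}}|\nabla u|^2\lesssim\int_{4Q_0}|\nabla u|^2$ gives
\[
G_j\lesssim C(\psi_0)\bigl(\ell(Q_0)^\alpha|A|_{C^\alpha(4Q_0)}\bigr)^{j}3^{-3\alpha j(j+1)/2}|Q_0|\biggl(\sint_{4Q_0}|\nabla u|^2\biggr)^{1/2}\|g\|_{h_z^p(3Q_0)},
\]
where the super-geometric decay from the quadratic exponent $-3\alpha j(j+1)/2$ beats any geometric growth of $(\ell(Q_0)^\alpha|A|_{C^\alpha})^j$; thus $\sum_{j\ge 0}G_j$ converges and yields the desired bound after the harmless comparison $\|g\|_{h_z^p(3Q_0)}\lesssim\|g\|_{h_z^p(4Q_0)}$. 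A parallel $L^2$-based argument, using the $L^2\to L^2$-boundedness of $\nabla T$ together with $\|A-A_{Q^{(k)}}\|_\infty\lesssim \ell(Q^{(k)})^\alpha|A|_{C^\alpha}$, shows $E_k\to 0$ as $k\to\infty$ by the same super-geometric mechanism.

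The hard part will be the careful propagation of Hardy-space norms through the cascade of rescaled operators. At each level one has to apply Lemma~\ref{lemma:hardy-constant} with the combined multiplier $\psi_{Q^{(k)}}(A-A_{Q^{(k-1)}})^T$, tracking its $L^\infty$ and $C^\alpha$ norms at the \emph{current} scale, and relate $\|g^{(j-1)}\|_{h_z^p}$ on the parent cube $3Q^{(j-1)}$ to the same norm on the child cube $3Q^{(j)}$ via a monotonicity-plus-normalization computation mirroring that in the proof of Lemma~\ref{lemma:iterable}. The conceptual mechanism that makes the iteration close is that the H\"older regularity of $A$ supplies at each step a small factor $\sim\ell(Q^{(k)})^\alpha|A|_{C^\alpha}$; over $j$ levels these accumulate into a product with super-geometric decay $3^{-3\alpha j(j-1)/2}$ which absorbs the geometric growth $(\ell(Q_0)^\alpha|A|_{C^\alpha})^j$ and produces the finite (but non-explicit) dependence on the scale-invariant quantity $\ell(Q_0)^\alpha|A|_{C^\alpha(4Q_0)}$ stated in the theorem.
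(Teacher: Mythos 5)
Your proposal is correct and follows essentially the same route as the paper: iterate Lemma~\ref{lemma:iterable} starting from $B=\psi_0 I$, propagate the Hardy norm through the cascade via Lemma~\ref{lemma:hardy-constant} and Lemma~\ref{lemma:cut-off-hardy}, observe that each level contributes a factor $\sim\ell(Q^{(k)})^\alpha|A|_{C^\alpha}$, and exploit the resulting super-geometric (quadratic-exponent) decay to sum the series and absorb the growth of $(\ell(Q_0)^\alpha|A|_{C^\alpha})^j$. Your use of Cauchy--Schwarz plus bounded overlap to sum the energy terms, and your explicit verification that the residual $E_k\to 0$, are minor refinements of the paper's cruder per-cube bound $\bigl(\sint_{3P}|\nabla u|^2\bigr)^{1/2}\le 27^{nk/2}\bigl(\sint_{4Q_0}|\nabla u|^2\bigr)^{1/2}$ (the quadratic decay dominates either way), so the substance of the argument is identical.
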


\begin{proof} 
For a family of cubes $\mathcal{Q}$,
we define the operation 
\[
\tilde{\mathcal{F}}(\mathcal{Q}) :=  \bigcup_{Q \in \mathcal{Q}}\mathcal{F}(Q) 
\]
with $\mathcal{F}(Q)$ defined as in the beginning of this Section \ref{sec:hoelder-coefficients}.
Starting from the initial cube $Q_0$,
we set 
\[
    \tilde{\mathcal{F}}^{0}(Q_0) := \{Q_0\} , \quad 
    \tilde{\mathcal{F}}^{k}(Q_0) := \tilde{\mathcal{F}}(\tilde{\mathcal{F}}^{k-1}(Q_0)), \quad k \ge 1  .
\]
For $P \in \tilde{\mathcal{F}}^{k}(Q_0)$ and for $j \in \{0,\ldots,k-1\}$,
we choose a parent $P^{j} \in \tilde{\mathcal{F}}^{j}(Q_0)$ such that $P \subset P^{j}$ and $P$ is obtained from $P^{j}$ in the previously described subdivision. 
We let $\psi_{P^{j}}$ be the function as in Lemma \ref{lemma:iterable}.
We set 
\begin{align*}
\mathcal{O}_{P,0} g &= \psi_{0}g \\ 
\mathcal{O}_{P, 1} g &= 1_{S_{A_{P^{1}}}(P^{1})} \nabla T_{P^{1},A_{P^{1}}}( \psi_{P^{1}} \psi_{0} g), \\
\mathcal{O}_{P,j+1} g &=  1_{S_{A_{P^{j+1}}}(P^{j+1})} \nabla T_{P^{j+1},A_{P^{j+1}}}(\psi_{P^{j+1}}(A-A_{ P^{j}})^T \mathcal{O}_{P,j} g),  
\end{align*}
for $1 \le  j \le k-1$. 
Iterating Lemma \ref{lemma:iterable},
we obtain the estimate  
\begin{multline}
\label{eq:in-iteration-hoelder}
\left \lvert \int_{Q_0} \psi_{0}(x)\nabla u(x) \cdot g(x) \, dx \right \rvert 
\le C  |A|_{C^{\alpha}(KQ_0;\R^{n \times n})} |KQ_0|^{1+\alpha/n} \\
\times
\sum_{k=0}^{\infty} 2^{-kN(\alpha + n)}  \sum_{P \in \tilde{\mathcal{F}}^{k}(Q_0)}  
\left( \sint_{KP} | \nabla u(x)|^{2} \, dx \right)^{1/2} \no{\mathcal{O}_{P,k} g}_{h_z^{p}(KP)}.
\end{multline}
Here we also used the bound $\no{\mathcal{O}_{P,k} g}_{h_z^{p}(S_{A_P}(P))} \lesssim \no{\mathcal{O}_{P,k} g}_{h_z^{p}(KP)}$ immediately following from \eqref{cube-containing-polytope}.

Using Definition \ref{def:T-operator} and
Lemma \ref{lemma:hardy-constant},  
we see that for $P \in \tilde{\mathcal{F}}^{k}(Q_0)$ and $j \in \{1, \ldots, k-1\}$
\begin{multline}
\label{eq:hoelder-iteration-step}
\no{\mathcal{O}_{P,j+1} g}_{h_{z}^{p}(KP^{j+1})}
\le C  ( \no{ (A-A_{P^{j}})^T}_{L^{\infty}(KP^{j+1};\R^{n \times n})} \\
+ \ell(KP^{j+1})^{\alpha} \ab{  (A-A_{P^{j}})^T}_{C^{\alpha}(KP^{j+1};\R^{n \times n})})\no{ \psi_{ P^{j+1}}  \mathcal{O}_{P,j} g}_{h_{z}^{p}(KP^{j+1})}.
\end{multline}
Further,
by Lemma \ref{lemma:cut-off-hardy}
and the definition of $h_z^{p}$ norm
\[
\no{ \psi_{ P^{j+1}}  \mathcal{O}_{P,j} g}_{h_{z}^{p}(KP^{j+1})} 
\le C \no{   \mathcal{O}_{P,j} g}_{h_{z}^{p}(KP^{j})}.
\]
Hence for all $j \in \{0, \ldots, k-1\}$,
the left hand side of \eqref{eq:hoelder-iteration-step} is bounded by
\[
C 2^{-N(j+1) \alpha}  \ab{A}_{C^{\alpha}(KP^{j+1})} \ell(Q_0)^{\alpha}
\no{ \mathcal{O}_{P,j} g}_{h_{z}^{p}(KP^{j})} .
\] 
Iterating this inequality, we get 
\[
\no{\mathcal{O}_{P,k} g}_{h_z^{p}(KP)} 
\le C^{k} 2^{- k^{2} \alpha } \ell(Q_0)^{k \alpha} \ab{A}_{C^{\alpha}(KQ_0)}^{k} \no{  g}_{h_{z}^{p}(KQ_0)} ,
\]
where we have used the lower bound
\[
\sum_{j=0}^{k}j = \frac{k(k+1)}{2} > \frac{k^{2}}{2}
\]
for the arithmetic sum.
Trivially also  
\[
\left( \sint_{KP} | \nabla u(x)|^{2} \, dx \right)^{1/2} 
\le 2^{Nnk/2}  \left( \sint_{KQ_0} | \nabla u(x)|^{2} \, dx \right)^{1/2} 
\]
so that the right hand side of \eqref{eq:in-iteration-hoelder} becomes bounded by
\begin{multline*}
C |Q_0|  \ell(Q_0)^{ \alpha} \ab{A}_{C^{\alpha}({KQ_0})}\left( \sint_{{KQ_0}} | \nabla u(x)|^{2} \, dx \right)^{1/2} \\
\times
\no{g}_{h_{z}^{p}(KQ_0)}  \sum_{k=0}^{\infty} [  C 2^{- k \alpha } \ell(Q_0)^{ \alpha} \ab{A}_{C^{\alpha}({KQ_0})} ]^{k}   
\end{multline*}
The sum converges,
and we see that it satisfies the claimed dependency on $\ell(Q_0)^{\alpha}\ab{A}_{C^{\alpha}({KQ_0})}$.
\end{proof}

Deducing the traditional H\"older bound
and the $L^{\infty}$ bound from the duality pairing estimate is now straigthforward,
taking advantage of the results recalled in Section \ref{sec:hardy-spaces}.

\begin{corollary}
\label{cor:holder-noncons-holder}
Let $0 < \lambda \le \Lambda< \infty$ and let $K = K_{\lambda,\Lambda}$. Let $\alpha \in (0,1)$.
Let $Q_0$ be a cube and let $A \in C^{\alpha}(KQ_0;\R^{n \times n})$
be such that $\sigma(A(x)) \subset [\lambda,\Lambda]$ for all $x \in KQ_0$.
Assume that $u \in W^{1,2}(KQ_0)$ satisfies for all test functions $\eta \in C^{\infty}_c(KQ_0)$
\[
\int A(x) \nabla u(x) \cdot \nabla \eta(x) \, dx = 0.
\] 
Then for all $x,y \in \frac{1}{2} Q_0$
\[
|\nabla u(x) - \nabla u(y)| \le C \left( \frac{|x-y|}{\ell(Q_0)} \right)^{\alpha} \left( \sint_{KQ_0} |\nabla u (z)|^{2} \, dz \right)^{1/2}
\]
where $C = C(n,p,\lambda, \Lambda, \ell(Q_0)^{\alpha} |A|_{C^{\alpha}(KQ_0; \R^{n \times n})} )$.
\end{corollary}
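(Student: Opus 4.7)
The plan is to use Theorem \ref{theorem:duality-clean} to identify $\psi_0 \nabla u$ (for a suitable cutoff $\psi_0$) as the representing element of a bounded linear functional on $h_z^p(4Q_0;\R^n)$, then apply the Hardy--H\"older duality of Corollary \ref{cor:duality-local-hardy} to pin down $\psi_0 \nabla u$ in $\Lambda_r^\alpha(4Q_0;\R^n)$ with a quantitative norm bound, and finally invoke Campanato's embedding to convert this seminorm bound into the pointwise H\"older estimate.

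Concretely, I would fix a cutoff $\psi_0 \in C_c^\infty(3Q_0)$ with $0 \le \psi_0 \le 1$ and $\psi_0 \equiv 1$ on $2Q_0$, depending only on $n$ so that the implicit dependence of Theorem \ref{theorem:duality-clean}'s constant on $\psi_0$ is absorbed. Setting $p = n/(n+\alpha)$, that theorem yields
\[
\left\lvert \int_{4Q_0} \psi_0 \nabla u \cdot g \, dx \right\rvert \le C |Q_0| \left( \sint_{4Q_0} |\nabla u|^2 \right)^{1/2} \no{g}_{h_z^p(4Q_0;\R^n)}
\]
for $g \in L^2(4Q_0;\R^n)$, which (by density of $L^2 \cap h_z^p$ in $h_z^p$) extends the functional $g \mapsto \int \psi_0 \nabla u \cdot g$ to a bounded linear functional on all of $h_z^p(4Q_0;\R^n)$. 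Applying Corollary \ref{cor:duality-local-hardy} with $a=z$, $b=r$, the representing element in $\Lambda_r^\alpha(4Q_0;\R^n)$ must agree with $|4Q_0|\,\psi_0 \nabla u$ (by comparing the two integral expressions against arbitrary $g \in L^2$). Dividing out $|4Q_0|$ and using $\ell(4Q_0) \sim \ell(Q_0)$, this gives
\[
\no{\psi_0 \nabla u}_{\Lambda_r^\alpha(4Q_0;\R^n)} \le C\, \ell(Q_0)^{-\alpha} \left( \sint_{4Q_0} |\nabla u|^2 \right)^{1/2}.
\]

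For $x, y \in 2Q_0$, every test cube $Q(z,r)$ with $z \in 2Q_0$ and $r < \ell(Q_0)$ is strictly interior to $4Q_0$, so its contribution to the $\Lambda_r^\alpha(4Q_0;\R^n)$ seminorm of $\psi_0 \nabla u$ coincides with the corresponding oscillation of $\nabla u$ (since $\psi_0 \equiv 1$ there). The classical Campanato embedding into $C^\alpha$ for $\alpha \in (0,1)$ (noted in the text following Definition \ref{def:holder-space}) then converts the above seminorm bound into
\[
|\nabla u(x) - \nabla u(y)| \le C |x-y|^\alpha \no{\psi_0 \nabla u}_{\Lambda_r^\alpha(4Q_0;\R^n)}, \qquad x,y \in 2Q_0,
\]
and combining this with the previous display produces exactly the stated inequality.

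The main technical obstacle I anticipate is bookkeeping: Corollary \ref{cor:duality-local-hardy} phrases the duality through the averaged integral $\sint_{4Q_0}$ whereas Theorem \ref{theorem:duality-clean} is written with the plain integral, so the factor $|4Q_0|$ must be tracked carefully to produce the exact power $\ell(Q_0)^{-\alpha}$ on the right-hand side. The density of $L^2 \cap h_z^p$ in $h_z^p$ and the passage from a Campanato seminorm estimate to pointwise H\"older continuity on a strictly interior subcube are routine; no further input beyond the Hardy space theory already developed in Section \ref{sec:hardy-spaces} and Theorem \ref{theorem:duality-clean} is needed.
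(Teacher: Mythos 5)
Your proof is correct and takes essentially the same route as the paper: use Theorem \ref{theorem:duality-clean} to realize $g \mapsto \int \psi_0 \nabla u \cdot g$ as a bounded functional on $h_z^p$, invoke Corollary \ref{cor:duality-local-hardy} (with $a=z$, $b=r$) together with the density of $L^2$ in $h_z^p$ to place $\psi_0\nabla u$ in $\Lambda_r^\alpha$, and conclude via Campanato's embedding. The only point where you are more explicit than the paper is the bookkeeping of the $\ell(Q_0)^{-\alpha}$ normalization between the averaged pairing $\sint_Q$ in Corollary \ref{cor:duality-local-hardy} and the plain integral in Theorem \ref{theorem:duality-clean}, which is exactly the factor needed to match the scaling-invariant form of the stated estimate.
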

\begin{proof}
By Theorem \ref{theorem:duality-clean}, Corollary \ref{cor:duality-local-hardy} and Theorem 2.7 in \cite{MR1223705} (density of $L^{2}$ in $h^{p}_z$),
taking $\psi_0 \in C_c^{\infty}(Q_0)$ that is identically one in $\frac{1}{2}Q_0$,
we see that
\[
\no{\psi_{0} \partial_i u }_{\Lambda_r^{\alpha}(Q_0)}
\le C \left( \sint_{KQ_0} | \nabla u(x)|^{2} \, dx \right)^{1/2} .
\]  
This together with 
\[
|\partial_i u|_{C^{\alpha}(2^{-1} Q_0)} \le |\psi_{0} \partial_i u|_{C^{\alpha}(Q_0)} 
\le \no{\psi_{0} \partial_i u }_{\Lambda_{r}^{\alpha}(Q_0)}
\]
implies the theorem.
\end{proof}

\begin{corollary}
\label{cor:holder-noncons-linfty}
Let $0 < \lambda \le \Lambda< \infty$ and let $K = K_{\lambda,\Lambda}$. Let $\alpha \in (0,1)$.
Let $Q_0$ be a cube and let $A \in C^{\alpha}(KQ_0;\R^{n \times n})$
be such that $\sigma(A(x)) \subset [\lambda,\Lambda]$ for all $x \in KQ_0$.
Assume that $u \in W^{1,2}(KQ_0)$ satisfies for all test functions $\eta \in C^{\infty}_c(KQ_0)$
\[
\int A(x) \nabla u(x) \cdot \nabla \eta(x) \, dx = 0.
\] 
Then 
\[
\sup_{x \in \frac{1}{2} Q_0} |\nabla u(x)| \le C   \left( \sint_{KQ_0} |\nabla u (z)|^{2} \, dz \right)^{1/2}
\]
where $C = C(n,\alpha,\lambda,\Lambda, \ell(Q_0)^{\alpha} |A|_{C^{\alpha}(KQ_0; \R^{n \times n})}  )$.
\end{corollary}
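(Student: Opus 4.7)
The plan is to deduce the $L^{\infty}$ estimate from the H\"older estimate of Corollary \ref{cor:holder-noncons-holder} by splitting $\nabla u(x)$ into its mean over $2Q_0$ and an oscillation term.

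First, for any $x \in 2Q_0$, I would write
\[
|\nabla u(x)| \le \left| \nabla u(x) - \la \nabla u\ra_{2Q_0} \right| + \left| \la \nabla u\ra_{2Q_0} \right|,
\]
where $\la \nabla u \ra_{2Q_0} = \sint_{2Q_0} \nabla u(y)\,dy$. The second term is bounded by Jensen's inequality:
\[
\left| \la \nabla u \ra_{2Q_0} \right| \le \left( \sint_{2Q_0} |\nabla u(y)|^{2} \, dy \right)^{1/2} \le C(n) \left( \sint_{4Q_0} |\nabla u(y)|^{2} \, dy \right)^{1/2}.
\]

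For the oscillation term, I would average the pointwise H\"older bound from Corollary \ref{cor:holder-noncons-holder} over $y \in 2Q_0$:
\[
\left| \nabla u(x) - \la \nabla u \ra_{2Q_0} \right|
\le \sint_{2Q_0} |\nabla u(x) - \nabla u(y)| \, dy
\le C \sint_{2Q_0} \left( \frac{|x-y|}{\ell(Q_0)} \right)^{\alpha} \, dy \cdot \left( \sint_{4Q_0} |\nabla u(z)|^{2} \, dz \right)^{1/2}.
\]
Since $|x-y| \le 2\sqrt{n}\,\ell(Q_0)$ for $x,y \in 2Q_0$, the average of $(|x-y|/\ell(Q_0))^{\alpha}$ is bounded by a dimensional constant, and the constant $C$ from Corollary \ref{cor:holder-noncons-holder} has exactly the required dependencies on $n, p, \lambda, \Lambda$ and $\ell(Q_0)^{\alpha}|A|_{C^{\alpha}(4Q_0;\R^{n\times n})}$ (with $p = n/(n+\alpha)$ so that $p$ depends only on $n$ and $\alpha$).

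Combining the two estimates and taking the supremum over $x \in 2Q_0$ yields the claim. No step here is really the main obstacle, since the heavy lifting has already been done in Theorem \ref{theorem:duality-clean} and Corollary \ref{cor:holder-noncons-holder}; the only thing to verify is that the dimensional factor coming from averaging $(|x-y|/\ell(Q_0))^{\alpha}$ over $2Q_0$ is harmless, which is immediate.
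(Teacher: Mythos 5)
Your proof is correct, but it takes a genuinely different route from the paper. You derive the $L^\infty$ bound as a downstream consequence of Corollary~\ref{cor:holder-noncons-holder}: split $\nabla u(x)$ into the mean $\la \nabla u\ra_{2Q_0}$ (controlled by Cauchy--Schwarz and the inclusion $2Q_0 \subset 4Q_0$, picking up a factor $2^{n/2}$) plus the oscillation (controlled by averaging the pointwise H\"older estimate over $y \in 2Q_0$, which yields a bounded dimensional factor since the ratio $|x-y|/\ell(Q_0)$ stays bounded on $2Q_0$). This is entirely valid, and the constant dependencies match because Corollary~\ref{cor:holder-noncons-holder} already carries exactly the stated ones. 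The paper, by contrast, does \emph{not} pass through the H\"older estimate at all: it applies Theorem~\ref{theorem:duality-clean} directly, observes that after rescaling to the unit cube the $h_z^p$ norm on the right-hand side is bounded by the $L^1$ norm of $g$ via Kolmogorov's inequality for the smooth maximal operator (a weak-type $(1,1)$ estimate restricted to a bounded set), and then dualizes $L^1 \leftrightarrow L^\infty$. The paper's argument is slightly more work but exposes a different feature of the framework, namely the embedding $L^1(4Q_0) \hookrightarrow h_z^p(4Q_0)$ at fixed scale, whereas your argument shows that once the $C^\alpha$ interior estimate is in hand, the $L^\infty$ bound is an elementary corollary. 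Both are fine; yours is arguably the shorter path given what is already proved.
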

\begin{proof}
By Theorem \ref{theorem:duality-clean},
we have for $p = n/(n+\alpha)$
\[
\left \lvert \int_{Q_0} \psi_{0}(x)  \nabla u(x) \cdot g(x) \, dx \right \rvert  \\
\le C |Q_0|  \left( \sint_{KQ_0} | \nabla u(x)|^{2} \, dx \right)^{1/2} \no{g}_{h_{z}^{p}(KQ_0)} .
\]
Writing $x_0 = c(Q_0)$ and $s_0 = \frac{K}{2}\ell(Q_0)$,
we have  
\begin{multline*}
\no{g}_{h_{z}^{p}(KQ_0)} = \no{g \circ \delta_{x_0,s_0}}_{h_{z}^{p}(Q(0,1))}
\le \no{ \psi g \circ \delta_{x_0,s_0}}_{h^{p}(\rn)} \\
\le  \left( \int_{Q(0,4)} \mathcal{M}_{2} (\psi g \circ \delta_{x_0,s_0} )(x) ^{p} \, dx \right)^{1/p}
\end{multline*}
when $\psi$ is a bump function localized in $Q(0,2)$.

By Kolmogorov's inequality (Lemma 5.12 in \cite{MR1800316}),
\begin{multline*}
\left( \int_{Q(0,4)} \mathcal{M}_{2} (\psi g \circ \delta_{x_0,s_0} )(x) ^{p} \, dx \right)^{1/p} \\
\le C \int |\psi(x)g( \delta_{x_0,s_0} x)| {\, dx} \le C \no{g}_{L^{1}(2KQ_0)}.
\end{multline*}
Taking supremum over all $g$ with $\no{g}_{L^{1}(\rn)} = 1$,
we see that the claim follows.

\end{proof}

\subsection{A sparse estimate}
Next we discuss a sparse estimate in the context of classical Schauder theory.
In \cite{MR4794496},
estimates as in Corollary \ref{cor:holder-noncons-linfty}
were shown to imply sparse bound estimates.
Using the duality theory from Section \ref{sec:hardy-spaces},
we can state a similar principle in the setting of H\"older spaces,
now using Corollary \ref{cor:holder-noncons-holder}. 

\begin{definition}
\label{def:partition-bumps}
Let $Q$ be a cube, $M > 0$ and let $N_0$ be as in Definition \ref{def:normalized-bumps}.
We define $\mathcal{A}_{Q,1}(M)$ as the family of those $\varphi \in C_{c}^{\infty}(Q)$
with 
\[
|\partial^{\gamma} \varphi| \le (M \ell(Q))^{-|\gamma|} 
\]
for $|\gamma| \le N_0$.
\end{definition}

\begin{lemma}
\label{lemma:iterable-sparse-schauder}
Let $0 < \lambda \le \Lambda< \infty$. 
Let $\alpha \in (0,1)$
and set $p = n/(n + \alpha)$.
Then there exist $N$ and $C$ such that the following holds.

Let $Q$ be a cube; let $A \in C^{\alpha}(6Q;\R^{n \times n})$ be such that $\sigma(A(x)) \subset [\lambda,\Lambda]$ for all $x \in 6Q$ and let $\varepsilon> 0$.
Assume that $\varphi_{Q} \in \mathcal{A}_{2Q,1}(N)$; 
$F \in C^{\infty}(6Q;\rn)$,
and assume that $u \in W^{1,2}(6Q)$ satisfies for all test functions $\eta \in C^{\infty}_c(6Q)$
\[
\int A(x) \nabla u(x) \cdot \nabla \eta(x) \, dx = \int F(x) \cdot \nabla \eta(x) {\, dx}.
\] 

Then,
given $g \in C^{\infty}(6Q)$, 
there exists a family $\mathcal{G}(Q)$ of pairwise disjoint dyadic subcubes of $2Q$
such that 
\[
\left \lvert \bigcup_{P \in \mathcal{G}(Q)} P \right \rvert \le \varepsilon |Q|
\]
and 
\begin{align*}
\left \lvert \int_{2Q} \varphi_{Q}(x) \nabla u(x) \cdot g(x) \, dx \right \rvert 
&\le  C  |Q| \left( \sint_{6Q} |\nabla u(x)|^{2} \, dx \right)^{1/2} \no{g}_{h_r^{p}(4Q)} \\
& \quad + \sum_{P \in \mathcal{G}(Q)} \left \lvert \int_ {2P}  \varphi_{P}(x) \nabla w_P(x) \cdot g(x) \, dx \right \rvert 
\end{align*}
where $\varphi_{P} \in \mathcal{A}_{2P,1}(N)$ and
where $w_P \in W^{1,2}(3P)$ satisfies for all test functions $\eta \in C^{\infty}_c(3P)$
\[
\int A(x) \nabla w_P(x) \cdot \nabla \eta(x) \, dx = \int F(x) \cdot \nabla \eta(x) \,  dx.
\] 
\end{lemma}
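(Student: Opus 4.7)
The plan adapts the iteration of Lemma \ref{lemma:iterable}, replacing the deterministic subdivision by a Calder\'on--Zygmund stopping time that selects the sparse family $\mathcal{G}(Q)$, and separating the homogeneous and inhomogeneous parts of $u$ from the outset. I decompose $u = \tilde u + v$ on $4Q$, where $\tilde u \in W^{1,2}(4Q)$ is the Lax--Milgram solution to $-\dive A \nabla \tilde u = 0$ with $\tilde u - u \in W_0^{1,2}(4Q)$, and $v := u - \tilde u \in W_0^{1,2}(4Q)$ solves $-\dive A \nabla v = -\dive F$. Testing the equation for $\tilde u$ against $\tilde u - u$ yields the energy bounds $\|\nabla \tilde u\|_{L^2(4Q)}, \|\nabla v\|_{L^2(4Q)} \lesssim \|\nabla u\|_{L^2(6Q)}$, with constants depending only on $\lambda$ and $\Lambda$.

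The homogeneous contribution $\int_{2Q}\varphi_Q \nabla \tilde u \cdot g$ goes directly into the main term: Corollaries \ref{cor:holder-noncons-holder} and \ref{cor:holder-noncons-linfty} applied to $\tilde u$ with $Q_0 = Q$ give H\"older and $L^\infty$ control of $\nabla \tilde u$ on $2Q$, both dominated by $\left(\sint_{6Q}|\nabla u|^{2}\right)^{1/2}$. Combined through Leibniz with the bump estimates of $\varphi_Q \in \mathcal{A}_{2Q,1}(N)$ and the smooth vanishing of $\varphi_Q$ on $\partial(2Q)$, they yield
\[
\ell(2Q)^{\alpha}\,\|\varphi_Q \nabla \tilde u\|_{\Lambda_z^{\alpha}(2Q)} \lesssim \left(\sint_{6Q}|\nabla u|^{2}\right)^{1/2},
\]
and Corollary \ref{cor:duality-local-hardy} with $a = r$, $b = z$ closes this part. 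For the inhomogeneous term, let $\mathcal{G}(Q)$ consist of the maximal dyadic subcubes $P \subsetneq 2Q$ with $\sint_P |\nabla v|^{2} > C_\varepsilon \sint_{6Q}|\nabla u|^{2}$: these are pairwise disjoint by maximality, and Chebyshev together with the $L^2$ bound on $\nabla v$ forces $\big|\bigcup_{P}P\big| \le \varepsilon|Q|$ once $C_\varepsilon$ is large enough. Because $\ell(P) \le \ell(Q)$ implies $3P \subset 4Q$, the restriction $w_P := v|_{3P}$ is a valid local weak solution on $3P$ with right-hand side $F$. Smooth cut-offs $\chi_P$ supported in pairwise disjoint enlargements of $P$ and equal to one on $P$ produce $\varphi_P := c\varphi_Q\chi_P \in \mathcal{A}_{2P,1}(N)$ with $c = c(N,N_0,n)$; writing $\Psi := \sum_P \chi_P$, the identity
\[
\int_{2Q}\varphi_Q \nabla v \cdot g\, dx = \int_{2Q}\varphi_Q(1 - \Psi)\nabla v \cdot g\, dx + c^{-1}\sum_{P \in \mathcal{G}(Q)}\int_{2P}\varphi_P \nabla w_P \cdot g\, dx
\]
isolates the sum that matches, up to an absorbable constant, the one in the statement.

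The main obstacle is the residual $\int_{2Q}\varphi_Q(1 - \Psi)\nabla v \cdot g\, dx$. On the support of $1 - \Psi$, Lebesgue differentiation and the maximality of $\mathcal{G}(Q)$ give the pointwise bound $|\nabla v| \lesssim C_\varepsilon^{1/2}\left(\sint_{6Q}|\nabla u|^{2}\right)^{1/2}$ almost everywhere, but $\nabla v$ need not be H\"older there. To upgrade this $L^\infty$ control into the $\Lambda_z^\alpha(2Q)$ bound demanded by Corollary \ref{cor:duality-local-hardy}, I freeze the coefficient matrix on a covering by dyadic cubes at intermediate scales between $\ell(Q)$ and the scales of the members of $\mathcal{G}(Q)$, apply the constant-coefficient estimates of Lemmas \ref{lemma:hoelder-constant} and \ref{lemma:hoelder-constant-no-RHS} to the resulting local homogeneous and inhomogeneous decompositions of $v$, and invoke the local Hardy-space theory of Section \ref{sec:hardy-spaces} to propagate the bound to the right scale. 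The resulting $\Lambda_z^\alpha(2Q)$ estimate, passed through Corollary \ref{cor:duality-local-hardy}, absorbs the residual into the main term; combining it with the contributions from $\tilde u$ and from the sum over $\mathcal{G}(Q)$ proves the lemma.
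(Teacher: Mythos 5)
Your plan departs from the paper's proof in one essential structural choice and that is where it breaks down. The paper stops simultaneously on two scalar maximal functions, one built from $|\nabla u|^2$ (the quantity $S_1$) \emph{and} one built from the local Hardy norms of $g$ (the quantity $S_2$); the stopping family $\mathcal{G}(Q)$ is then the Whitney decomposition of $E_1\cup E_2$, and the local decomposition $u = u_P + (u-u_P)$ is performed \emph{inside each} Whitney cube, with $u_P$ the $A$-harmonic replacement on $3P$. You instead decompose $u = \tilde u + v$ once globally, stop only on $\nabla v$, and leave yourself with the residual $\int_{2Q}\varphi_Q(1-\Psi)\nabla v\cdot g\,dx$ to be closed by Hardy--H\"older duality against $\|g\|_{h_r^p(4Q)}$. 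That pairing requires a genuine $\Lambda_z^\alpha(2Q)$ bound on $\varphi_Q(1-\Psi)\nabla v$, and this is exactly what you do not have: the stopping-time estimate only yields a pointwise $L^\infty$ bound $|\nabla v|\lesssim(\sint_{6Q}|\nabla u|^2)^{1/2}$ on the complement of the stopping cubes, which says nothing about the $\alpha$-H\"older modulus (the $\Lambda_z^\alpha$ oscillation quotient blows up as $r\to 0$). The fix you sketch---freezing coefficients at intermediate scales and invoking the constant-coefficient Lemmas \ref{lemma:hoelder-constant} and \ref{lemma:hoelder-constant-no-RHS}---does not close either: the frozen inhomogeneous remainder $r_{P'}$ solves $-\dive A_{P'}\nabla r_{P'} = -\dive F + \dive(A_{P'}-A)\nabla v$, whose right-hand side contains $\nabla v$ itself, so obtaining $C^\alpha$ control on $\nabla r_{P'}$ presupposes the very H\"older regularity of $\nabla v$ that the lemma's iteration is supposed to generate. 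In effect the ``freeze and propagate'' step would redo the whole iteration inside one step, which is circular.

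The extra stopping time on $g$ in the paper's proof is not an optimization but the mechanism that removes this obstruction. Because $S_2$ is stopped, every Whitney cube $P\in\mathcal{G}(Q)$ inherits $\|g\|_{h_r^p(2P)}\lesssim\|g\|_{h_r^p(4Q)}$, so term II can be estimated cube by cube with H\"older control coming only from the \emph{homogeneous} replacement $u_P$ (Corollary \ref{cor:holder-noncons-holder}), never from $\nabla u$ or $\nabla v$ off the stopping cubes. Term III, the remainder $u - u_P$, is simply handed to the next iteration, which is what the statement asks for. Your global split $u = \tilde u + v$ and the treatment of the $\tilde u$ part are fine and parallel to the paper's term II, and your stopping time on $\nabla v$ would give the required small measure, but without the second stopping time you cannot control the duality pairing on the complement, and the argument as written has a genuine hole. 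Adding the stopping time on $S_2$, switching to a Whitney decomposition of the union of the two exceptional sets, and replacing the global split by the local $A$-harmonic replacement on each stopping cube recovers the proof.
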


\begin{proof}
Define the auxiliary maximal functions 
\[
S_1(x) = \sup_{P \subset 3Q} \sint_{3P} \varphi_Q(x) |\nabla u(x)|^{2} \, dx , 
\quad S_2(x) =  \sup_{P \subset 3Q} \no{g}_{h_{r}^{p}(2P)}
\]
where the suprema are over all dyadic subcubes of $3Q$.
Clearly both $S_1$ and $S_2$ are lower semicontinuous functions.
For a constant $C_0$ large enough, to be fixed later, set 
\begin{align*}
E_1 &:= \{ x \in 3Q : S_1(x) > C_0 \la{ |\nabla u|^{2}}_{6Q}  \}, \\
E_2 &:= \{ x \in 3Q : S_2(x) > C_0 \no{g}_{h_{r}^{p}(4Q)}  \}.
\end{align*}
By the Hardy--Littlewood maximal function theorem,
\[
|E_1| \le \frac{C}{C_0 \la{  |\nabla u|^{2}}_{6Q}} \int_{6Q} |\nabla u(x)|^{2} \, dx \le \frac{C|Q|}{C_0}.
\]
Choosing $C_0$ large enough, only depending on $n$ and $\varepsilon > 0$,  
we see that 
\[
|E_1| \le \varepsilon |Q| / 2.
\]

To get a similar estimate for $|E_2|$,
consider the Whitney decomposition of $E_2$.
First, for $j \ge 0$,
let $\tilde{W}_{j,2}$ be the family of all dyadic subcubes $P$ of $3Q$
such that
\begin{center}
 $\ell(P) = 3 \cdot 2^{-j-6}\ell(Q)$ and $P \cap \{x \in E_2  : 2^{-j-1} \ell(Q) < \dist_{\infty}(x, E_2^{c}) \le 2^{-j} \ell(Q) \}\ne \varnothing$ .
\end{center}
Then we let $\mathcal{W}_2$ be the family of maximal elements in $\bigcup_{j \ge 0} \tilde{\mathcal{W}}_{j,2}$.
We let $\mathcal{W}_{j,2} = \mathcal{W}_{2} \cap \tilde{\mathcal{W}}_{j,2}$.
Note that for $C_0$ large enough, 
we may ensure that $W_{j,2} = \varnothing$ for $j < j_0$ for some $j_0$ only depending on $C_0$,
that is,
there are not large Whitney cubes.
Also, for cubes $P$ in $\mathcal{W}_{2}$,
we have $P \subset E_2$ and $2^{6} P \cap E_{2}^{c} \ne \varnothing$.
By the first one of these properties, 
\[
C_0 \no{g}_{h_{r}^{p}(4Q)}  
< \no{g}_{h_{r}^{p}(2P)}.
\]
By the second one,
we can find $\tilde{P}$, a dyadic subcube of $3Q$
such that $\ell(\tilde{P}) = 2^{6}\ell(P)$,
$\tilde{P} \cap E_2^{c} \ne \varnothing$,
and $2\tilde{P} \supset 2P$.
Then 
\[
\no{g}_{h_{r}^{p}(2P)} \le \left(\frac{\ell(\tilde{P})}{\ell(P)}\right)^{n/p} \no{g}_{h_{r}^{p}(2 \tilde{P})} \le C \no{g}_{h_{r}^{p}(4Q)}
\] 
where the last inequality followed from the fact that $\tilde{P}$ 
is not contained in $E_2$.

Now,
for any $P \in \mathcal{W}_{2}$,
let $\psi_P \in C_{c}^{\infty}(3P)$ be such that $\psi_{P} = 1$ in $2P$
and $|\partial^{\gamma} \psi_{P}| \le C_{\gamma} \ell(P)^{-|\gamma|}$
for all $\gamma \in \N^{n}$.
Taking $\tilde{g}$ with $\tilde{g}=g$ in $2Q$ (arbitrary Hardy extension), 
we see by Lemma \ref{lemma:cut-off-hardy} that for all $P \in \mathcal{W}_{2}$
with $P \cap 2Q \ne \varnothing$
\[
|2P| \no{g}_{h_{r}^{p}(2P)}^{p}
\le \int_{\rn} \mathcal{M}_{\ell(2P)}(\psi_{P} \tilde{g})(x) ^{p} \, dx  
\le C \int_{8P} \mathcal{M}_{\ell(2P),*} \tilde{g}(x)^{p} \, dx .  
\]
On the other hand,
by the construction of the Whitney decomposition,
we know that
\[
\sum_{P \in \mathcal{W}_2} 1_{8P} \le C
\]
for $C$ only depending on the dimension.
Hence we may conclude 
\begin{multline*}
|E_2|
\le \sum_{P \in \mathcal{W}_{2}} |P|
\le \frac{C}{C_0^{p} \no{g}_{h_{r}^{p}(4Q)}^{p}} 
\sum_{P} \int_{8 P} \mathcal{M}_{\ell(2\tilde{P}),*} \tilde{g}(x)^{p} \, dx  \\
\le \frac{C}{C_0^{p} \no{g}_{h_{r}^{p}(4Q)}^{p}}  \int_{\rn} \mathcal{M}_{\ell(4Q),*} \tilde{g}(x)^{p} \, dx .
\end{multline*}
Taking infimum over all $\tilde{g}$ as above
and applying Lemma \ref{lemma:grand-function},
we bound 
\[
\int_{\rn} \mathcal{M}_{\ell(4Q)} \tilde{g}(x)^{p} \, dx \le |4Q| \no{g}_{h_r^{p}(4Q)}^{p}
\]
and so 
\[
|E_2| \le \varepsilon |Q|/2
\] 
provided $C_0$ is large enough,
only depending on $p$, $n$ and $\varepsilon$.
This shows that $E_1 \cup E_2$ is an open set such that any family of cubes partitioning it satisfies the size estimate for $\mathcal{G}(Q)$ as in the claim of the statement.
Next we choose carefully the most suitable partition.
 
We abandon the Whitney decomposition of $E_2$, 
and we let $\mathcal{G}(Q)$
be the Whitney decomposition of $E_1 \cup E_2$,
formed by the very same argument but $E_1 \cup E_2$ in place of $E_2$.
If $P$ and $P'$ are cubes from a Whitney decomposition,
we know that if $2P \cap 2 P' \ne \varnothing$,
then $\ell(P) \sim \ell(P')$.
Consequently,
we can find smooth functions $\{\varphi_P: P \in \mathcal{G}\}$ such that 
\[
  0 \le \varphi_P \le 1_{2P}, \quad |\partial^{\gamma} \varphi_P| \le C_{\gamma}\ell(P)^{-|\gamma|}, \quad  1_{E_1 \cup E_2} \le  \sum_{P \in \mathcal{G}} \varphi_{P} \le 1_{4Q}. 
\]
Now,
we can estimate 
\begin{multline*}
\left \lvert \int_{2Q} \varphi_{Q}(x) \nabla u(x) \cdot g(x) \, dx \right \rvert 
\le  C_0|Q| \left( \sint_{6Q} |\nabla u(x)|^{2} \, dx \right)^{1/2} \no{g}_{h_r^{p}(4Q)} \\
+ \sum_{P \in \mathcal{G}}  \left \lvert\int \varphi_{P}(x) \nabla u_P(x) \cdot g(x) \, dx \right \rvert  \\
+\sum_{P \in \mathcal{G}}  \left \lvert\int \varphi_{P}(x) [\nabla u(x) - \nabla u_{P}(x)] \cdot g(x) \, dx \right \rvert \\
= \I + \II + \III
\end{multline*}
where $u_P \in W^{1,2}(3P)$ solves 
\begin{align*}
  -\dive A \nabla u_P &= 0 , \quad \trm{in 3P}, \\
  u_P - u &\in W^{1,2}_{0}(3P).
\end{align*} 

The term $\I$ is of the desired form and its bound follows from the bounds for $S_1$ and $S_2$ in the complement of $E_1 \cup E_2$.
The term $\III$ is also of the desired form.
For the term $\II$,
we apply Corollary \ref{cor:duality-local-hardy} to estimate 
\begin{equation}
\label{eq:sparse-schauder-proof-1}
\left \lvert\sint_{2P} \varphi_{P}(x) \nabla u_P(x) \cdot g(x) \, dx \right \rvert
\le C \ell (P)^{\alpha} \no{\varphi_P \nabla u_P }_{\Lambda_{z}^{\alpha}(2P)}\no{g}_{h_{r}^{p}(2P)}.
\end{equation}
Here (similarly as in the proof of Lemma \ref{lemma:iterable}),
we have 
\begin{multline*}
\ell (P)^{\alpha} \no{\varphi_P \nabla u_P }_{\Lambda_{z}^{\alpha}(2P)}
\le C \no{ \ell (P)^{\alpha}\varphi_P \nabla u_P }_{C^{\alpha}(2P)} \\
\le C \ab{\ell (P)^{\alpha} \varphi_P}_{C^{\alpha}(2P)} \no{\nabla u_P}_{L^{\infty}(2P)} +  C \ab{ \nabla u_P}_{C^{\alpha}(2P)} \no{\ell (P)^{\alpha} \varphi_{P}}_{L^{\infty}(2P)}.
\end{multline*}
By the construction of $\varphi_P$, by Corollary \ref{cor:holder-noncons-holder}, by Corollary \ref{cor:holder-noncons-linfty},  
the right hand side is bounded by 
\[
C \left( \sint_{3P} | \nabla u_{P} (x)|^{2} \, dx \right)^{1/2} \le C \left( \sint_{3P} | \nabla u (x)|^{2} \, dx \right)^{1/2}
\]
where the last step used the $L^{2}$ bound for the gradients.
The quantity above, in turn, has the desired upper bound by the Whitney property of $P$.
To estimate the other factor in \eqref{eq:sparse-schauder-proof-1},
we see that by the Whitney property of $P$
\[
\no{g}_{h_{r}^{p}(2P)} \le C \no{g}_{h_r^{p}(4Q)}.
\] 
\end{proof}

As a straigthforward application of Lemma \ref{lemma:iterable-sparse-schauder},
we get the sparse estimate displayed in the introduction.
\begin{proof}[Proof of Theorem \ref{intro-thm:sparse-schauder}]
This follows by iterating Lemma \ref{lemma:iterable-sparse-schauder}
(compare to \cite{MR4794496})
and using the estimate 
\[
\no{\nabla u}_{L^{2}(6P)} \le  C  \no{F - \la{F}_{6P}}_{L^{2}(6P)},
\]
valid whenever $u \in W^{1,2}_0(6P)$ is a weak solution to 
\[
-\dive A \nabla u = \dive F.
\]
Here we used that $\dive F = \dive (F - \la{F}_{6P})$.
\end{proof}

Finally,
we show that the classical Schauder estimate for equations with H\"older coefficients is hidden inside the sparse form.
Unfortunately, 
as is common with the sparse form arguments,
we do not recover the endpoint regularity. 

\begin{corollary}
\label{cor:schauder-from-sparse}
Let $0 < \lambda \le \Lambda< \infty$. Let $\alpha \in (0,1)$.
Let $Q$ be a cube and let $A \in C^{\alpha}(6Q;\R^{n \times n})$.
Let $u \in W_0^{1,2}(6Q)$ be a weak solution to 
\[
- \dive A(x) \nabla u(x) = \dive F(x)
\]
in $Q$ for $F \in C^{\infty}(6Q;\rn)$.

Let $\beta \in (0,\alpha)$.
Then for all $x,y \in Q$
\[
|\nabla u(x) - \nabla u (y)| 
\le C  |x-y|  ^{\beta}    \ab{F}_{C^{\beta}(6Q;\rn)}
\]
where $C = C(n,\lambda,\Lambda,\alpha, \ell(Q)^{\alpha} |A|_{C^{\alpha}(6Q;\R^{n \times n})},  \alpha - \beta)$.
\end{corollary}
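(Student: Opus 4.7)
Fix $\beta \in (0,\alpha)$ and set $p = n/(n+\beta) \in (n/(n+1),1)$, so that $\Lambda_r^\beta$ is dual to $h_z^p$ in the sense of Corollary \ref{cor:duality-local-hardy}. Since $A \in C^\alpha(6Q) \subset C^\beta(6Q)$, Theorem \ref{intro-thm:sparse-schauder} applies with $\beta$ in place of $\alpha$. The plan is to apply the sparse bound on $Q$, extract a Campanato-type estimate on $\nabla u$ via Hardy--H\"older duality, and conclude by Campanato's theorem.

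For $g \in L^2(2Q;\rn)$ and a bump $\varphi_Q \in \mathcal{A}_{2Q,1}(N)$ equal to one on $Q$, Theorem \ref{intro-thm:sparse-schauder} furnishes an $\varepsilon$-sparse family $\mathcal{G}$ of subcubes of $2Q$ with
\[
\left\lvert \int \varphi_Q \nabla u \cdot g \right\rvert \le C \sum_{P \in \mathcal{G}} |P| \left( \sint_{6P} |F - \la{F}_{6P}|^2 \right)^{1/2} \no{g}_{h_r^p(4P)}.
\]
The H\"older regularity of $F$ gives $(\sint_{6P} |F - \la{F}_{6P}|^2)^{1/2} \le C \ell(P)^\beta |F|_{C^\beta(6Q)}$, which combined with the identity $|P|\ell(P)^\beta = c_n |P|^{1/p}$ reduces the right-hand side to $C|F|_{C^\beta(6Q)} \sum_{P \in \mathcal{G}} |P|^{1/p} \no{g}_{h_r^p(4P)}$.

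The heart of the argument is the summation estimate
\[
\sum_{P \in \mathcal{G}} |P|^{1/p} \no{g}_{h_r^p(4P)} \le C |Q|^{1/p} \no{g}_{h_z^p(2Q)}
\]
for $g$ supported in $2Q$, which bridges the $h_r^p$ appearing in the sparse bound with the $h_z^p$ required by the target duality. Its proof will combine: the formula \eqref{eq:def-local-hardy-true-rest} with the extension $\tilde g = g \cdot 1_{4P}$, rewriting each summand as a local integral of $\mathcal{M}_{\ell(4P)}(g \cdot 1_{4P})^p$; the $p$-triangle inequality $\sum a_i^{1/p} \le (\sum a_i)^{1/p}$ valid for $p \le 1$; the bounded overlap of the enlargements $\{6P : P \in \mathcal{G}\}$, inherited from the Whitney construction underlying Lemma \ref{lemma:iterable-sparse-schauder}; and Lemma \ref{lemma:grand-function} to collapse the resulting integral into $|Q|\no{g}_{h_z^p(2Q)}^p$ via \eqref{eq:def-local-hardy-true-zero}.

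Combining the three steps yields $|\int \varphi_Q \nabla u \cdot g| \le C |F|_{C^\beta(6Q)} |Q|^{1/p} \no{g}_{h_z^p(2Q)}$ for every $g \in L^2(2Q;\rn)$. Density of $L^2$ in $h_z^p$ (Theorem 2.7 of \cite{MR1223705}) extends the bound to all $g \in h_z^p(2Q)$, and Corollary \ref{cor:duality-local-hardy} then produces $\no{\varphi_Q \partial_i u}_{\Lambda_r^\beta(2Q)} \le C |F|_{C^\beta(6Q)}$ component by component. Since $\varphi_Q \equiv 1$ on $Q$, Campanato's theorem translates this into the claimed H\"older bound on $\nabla u$ over $Q$. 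The main obstacle is the summation estimate, where the scale-invariance $1/p = 1+\beta/n$ together with the overlap control coming from the Whitney construction must be exploited to turn the sparse form into a regularity gain; everything else is a routine application of tools already assembled in the paper, and it is this very step that loses the endpoint case $\beta = \alpha$.
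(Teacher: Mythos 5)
Your proposal reaches the same conclusion by a route that looks similar at a glance, but the ``heart'' of your argument — the summation estimate $\sum_{P \in \mathcal{G}} |P|^{1/p}\no{g}_{h_r^p(4P)} \le C|Q|^{1/p}\no{g}_{h_z^p(2Q)}$ — rests on a false premise and as stated does not hold, so there is a genuine gap.

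You justify the summation estimate by ``bounded overlap of the enlargements $\{6P : P\in\mathcal{G}\}$, inherited from the Whitney construction.'' Within a single application of Lemma \ref{lemma:iterable-sparse-schauder} the Whitney cubes are indeed pairwise disjoint with bounded-overlap dilations, but the sparse family $\mathcal{G}$ in Theorem \ref{intro-thm:sparse-schauder} comes from \emph{iterating} that lemma: every cube $P$ generates a new generation of stopping cubes inside $3P$, and so on. The resulting family contains nested chains $P_1 \supsetneq P_2 \supsetneq \cdots$ of arbitrary length (the sparse condition $\sum 1_{E_P}\le 1$ forbids only the measures $|E_{P_k}|$ from overlapping, not the cubes themselves), and at a point in $\bigcap_k P_k$ the overlap $\sum_P 1_{16P}$ is unbounded. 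So the step where you ``collapse the resulting integral'' by pointwise overlap control is not available. If instead you exploit the disjoint sets $E_P$ in the way sparse bounds are normally used — writing $|P|\le \varepsilon^{-1}|E_P|$ and bounding $\inf_{x\in P}$ of a local average by $M(\cdot)(x)$ for $x\in E_P$ — you end up needing $M$ to be bounded on $L^1$, since with your choice $p=n/(n+\beta)$ the integrand after raising to the $p$th power is only in $L^1$. That fails.

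This is precisely what the paper's choice of exponents is designed to avoid. The paper applies the sparse bound with the \emph{true} exponent $p=n/(n+\alpha)$ (keeping the full H\"older regularity of $A$), pays $\ell(P)^\beta$ for the $F$-oscillation, and then the leftover cube power is $|P|^{1/q}$ with $q = n/(n+\beta) > p$. Raising to the power $q$ and using the disjoint $E_P$'s yields $\int M(\mathcal{M}_{\ell(4Q),*}\tilde{g}^p)^{q/p}$, and since $q/p > 1$ — equivalently $1/p-1/q = (\alpha-\beta)/n > 0$ — the Hardy--Littlewood maximal theorem applies. Your approach of relabeling $\alpha\mapsto\beta$ at the outset discards exactly the room $(\alpha-\beta)/n$ that makes this work, so the loss of the endpoint $\beta=\alpha$ in your scheme is not an artifact of one technical step but a symptom that the scheme cannot close. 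The fix is to keep $p=n/(n+\alpha)$ from the sparse theorem, pair against $h_r^q$ with $q=n/(n+\beta)$ at the end, and run the paper's maximal-function argument in between.
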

\begin{proof} 
By the definition of the grand maximal function and
Lemma \ref{lemma:grand-function},
we have that for any $\tilde{g} \in L^{p}(\rn)$ with $\tilde{g} = g$ in $4Q$
and all $P \subset 2Q$
\[
\no{g}_{h_r^{p}(4P)}
\le C \left( \frac{1}{|4P|} \int_{16P} \mathcal{M}_{\ell(4P),*} \tilde{g}(x)^{p} \, dx   \right)^{1/p} .
\]
Choosing $q > p$ with $(\alpha - \beta)/n = 1/p - 1/q$
and using the definitions,
we see that
\begin{align*} 
\sum_{P \in \mathcal{G}}  |P|  &\left( \sint_{6P} |F  (x)  - \la{F}_{6P}|^{2} \, dx \right)^{1/2}\no{g}_{h_r^{p}(4P)} \\
&\le  C   \no{F}_{\Lambda_r^{\beta}(6Q)}  \sum_{P \in \mathcal{G}} |P|^{\frac{\beta- \alpha}{n}}  \left(  \int_{16P} \mathcal{M}_{\ell(4P),*} \tilde{g}(x)^{p} \, dx   \right)^{1/p} \\
&\le C \no{F}_{\Lambda_r^{\beta}(6Q)}  \sum_{P \in \mathcal{G}} |P|^{1/q} \inf_{x \in P} M (\mathcal{M}_{\ell(4Q),*} \tilde{g}^{p})(x)^{1/p} \\
&\le C \no{F}_{\Lambda_r^{\beta}(6Q)}  \left( \sum_{P \in \mathcal{G}} |P| \inf_{x \in P} M (\mathcal{M}_{\ell(4Q),*} \tilde{g}^{p})(x)^{q/p} \right)^{1/q}  \\
&\le C \no{F}_{\Lambda_r^{\beta}(6Q)}  \left( \sum_{P \in \mathcal{G}} |E_P| \inf_{x \in P} M (\mathcal{M}_{\ell(4Q),*} \tilde{g}^{p})(x)^{q/p} \right)^{1/q} \\
&\le C \no{F}_{\Lambda_r^{\beta}(6Q)}  \left( \int_{\rn} M (\mathcal{M}_{\ell(4Q),*} \tilde{g}^{p})(x)^{q/p} \, dx  \right)^{1/q} \\
 &\le C \no{F}_{\Lambda_r^{\beta}(6Q)}  \left( \int_{\rn} \mathcal{M}_{\ell(4Q),*} \tilde{g}(x)^{q} \, dx  \right)^{1/q} .
\end{align*}      
Minimizing over all $\tilde{g}$ as above and using Lemma \ref{lemma:grand-function},  
we bound the right hand side by 
\[
C |Q|^{1/q} \no{F}_{\Lambda_r^{\beta}(6Q)} \no{g}_{h_r^{q}(4Q)}.
\]
Because 
\[
\frac{1}{q} = \frac{1}{p} - \frac{\alpha-\beta}{n} 
= \frac{1}{p} - \frac{n(1/p-1)-\beta}{n} = 1 + \frac{\beta}{n}, 
\]
Corollary \ref{cor:duality-local-hardy} and Theorem 2.7 in \cite{MR1223705} (density of $L^{2}$ in $h^{p}_r$) imply
\[
\ab{\nabla u}_{C^{\beta}(Q)}
\le \no{\psi_{Q} \nabla u}_{\Lambda_z^{\beta}(2Q)}
\le C \no{F}_{\Lambda_r^{\beta}(6Q)}
\le C |F|_{C^{\beta}(6Q)}.
\]
\end{proof}

\begin{remark}
\label{remark:sharpness}
The argument above misses the endpoint $\alpha = \beta$,
in which case the H\"older continuity is known (usually attributed to \cite{MR192168}).
We do not know if there is a way to infer the endpoint Schauder estimates in analogy to $(1,1)$ sparse bound implying weak type $(1,1)$ estimate (Theorem E in \cite{MR3668591}).
In the case $\beta > \alpha$,
the conclusion of Corollary \ref{cor:holder-noncons-holder} is false. 
In particular, exponents $p < n/(\alpha+ n)$ cannot be used in Theorem \ref{intro-thm:sparse-schauder}.
We recall that an example of a solution to $-\dive A \nabla u = \dive F$ with $A \in C^{\alpha}(\R^{n})$, $F \in C^{\infty}(\R^{n})$ and $u \in C^{1,\alpha}(\R^{n}) \setminus \bigcup_{\beta > \alpha} C^{1,\beta}(\R^{n})$ can be constructed as follows (we suppose this is folklore by now):

Let $n = 2$ and $\alpha \in (0,1)$.
Denote $e_1  = (1,0)$ and $e_2 = (0,1)$. 
Let 
\begin{align*}
p(x) &= e_1 + |x|^{\alpha-1}x ,\\
q(x) &= e_1 + |x|^{\alpha-1} (x_2e_1 - x_1 e_2), \\
v(x) &= q(x) - p(x) .
\end{align*}
With this notation,
we set 
\[
A(x) = I + \frac{vp^{T} + pv^{T}}{|p|^{2}} - \frac{(v \cdot p)pp^{T}}{|p|^{4}}
\]
and define 
\[
u(x) = x_1 + \frac{|x|^{\alpha+1}}{\alpha+1}.
\]

In the vicinity of $0$,
we have that $|p| \sim |q| \sim 1$ and $|v| \sim |x|^{\alpha}$.
Hence $A$ is elliptic.
Moreover, it is $\alpha$-H\"older as the vectors $p$ and $q$ are.
The function $u$ is $C^{1,\alpha}(\rn)$ and no better.
Also $q = Ap$ is no better than $C^{\alpha}(\rn)$. 
Finally, it holds $\nabla u = p$, $Ap = q$ and $\dive q = 0$ 
so that $u$ solves the equation with zero on the right hand side.

If $\varphi \in C^{\infty}_{c}(B(0,5))$ is identically $1$ in $B(0,1)$,
we have that $\tilde{u} = u \varphi$ solves 
\[
- \dive A \nabla \tilde{u} =  - \dive  (u A \nabla \varphi) - A\nabla u \cdot \nabla \varphi.
\]
Because $\nabla \varphi = 0$ in $B(0,1)$,
the argument of the divergence on the right hand side is smooth and compactly supported.
For the same reason,
the second term on the right hand side is smooth and compactly supported.
Its integral is zero so it has a representation as a divergence of a smooth and compactly supported function (its image under the Bogovskij operator of $B(0,10)$, see for instance \cite{MR2240056} for the definition and properties).
Hence $\tilde{u}$ solves $-\dive A \nabla \tilde{u} = \dive F$ with smooth $F$
and both $\nabla \tilde{u}$ and $A \nabla \tilde{u}$ have $|x|^{\alpha}$ type beahviour at the origin.

Further, by duality,
this example also shows that for equations with $C^{\alpha}(\rn)$ coefficients,
the gradient of the solution $\nabla u$ need not be in any Hardy space $h^{p}$ when $p < n/(n+\alpha)$.
\end{remark}

\section{Equations with uniformly continuous coefficients}
\label{sec:bmo-case}
In this section,
we discuss
a simplified proof of a weaker version of Theorem \ref{theorem:duality-clean}
in the limiting case of the coefficient smoothness.
We assume that the coefficient matrix $A$ is uniformly continuous.
In this setting,
we cannot rely on Hardy space theory,
which serves as an excuse to expose the leading idea behind the proofs in Section \ref{sec:hoelder-coefficients} with minimal amount of technical difficulties.

In this section, let $\mathcal{F}(Q)$ be the family of the interiors of half-open cubes 
$P$ partitioning a half open cube containing the open cube $Q$ and satisfying $|P| = 2^{-3n}|Q|$.
We first prove a lemma analogous to Lemma \ref{lemma:iterable}.
Instead of duality of Hardy spaces,
we use H\"older's inequality.
\begin{lemma}
\label{lemma:iterable-bmo}
Let $0 < \lambda \le \Lambda< \infty$, $q \in (2,\infty)$. and $D \ge 0$.  
Let $Q$ be a cube; let the measurable function $A : 3Q \to \R^{n \times n}$ satisfy $\sigma(A) \subset [\lambda,\Lambda]$, and let $B \in L^{q}(3Q;\R^{n \times n})$
be such that 
\[
 |3Q|^{-1/q} \no{B}_{L^{q}(3Q;\R^{n \times n})} \le D.
\]
Assume that $u \in W^{1,2}(3Q)$ satisfies for all test functions $\eta \in C^{\infty}_c(3Q)$
\[
\int A(x) \nabla u(x) \cdot \nabla \eta(x) \, dx = 0.
\] 

Then,
if $g \in L^{2}(3Q;\rn)$,
it holds 
\begin{multline*}
\left \lvert \int_{Q} B(x)\nabla u(x) \cdot g(x) \, dx \right \rvert  \\
\le C  D |Q|^{1/q} \left( \sint_{3Q} |\nabla u(x)|^{2} \, dx \right)^{1/2} \no{g}_{L^{q'}(3Q;\rn)} \\
+ \left \lvert \sum_{P \in \mathcal{F}(Q)} \int_{3P} (A-A_{P})\nabla u (x) \cdot \nabla T_{3P,A_P}( 1_{P} B^{T} g )(x) \, dx \right \rvert  
\end{multline*}  
where $C = C(n,\lambda,\Lambda)$ and 
\[
A_P = \sint_{3P} A(x) \, dx.
\]
\end{lemma}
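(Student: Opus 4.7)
The plan is to mirror the argument for Lemma \ref{lemma:iterable}, but because we are now working with Hölder's inequality instead of Hardy–Hölder duality, the smooth partition of unity $\{\psi_P\}$ used there can be replaced by the plain indicator functions $\{1_P: P\in\mathcal{F}(Q)\}$, since $\mathcal{F}(Q)$ already partitions $Q$ modulo measure zero.

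First, I would partition $\int_Q B\nabla u\cdot g\,dx = \sum_{P\in\mathcal{F}(Q)}\int_P B\nabla u\cdot g\,dx$ and introduce, for each $P$, the frozen-coefficient solution $u_P\in u+W^{1,2}_0(3P)$ to $-\dive A_P\nabla u_P=0$ on $3P$, where $A_P=\sint_{3P}A$. Setting $w_P=u-u_P\in W^{1,2}_0(3P)$, I split each summand as $\int_P B\nabla u_P\cdot g + \int_P B\nabla w_P\cdot g=:\I_P+\II_P$.

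For the sum of $\I_P$, I would use Hölder's inequality on each $P$ with exponents $q$ and $q'$, putting $\nabla u_P$ in $L^\infty$. Specifically, Lemma \ref{lemma:hoelder-constant-no-RHS} (with $\alpha=0$) applied to $u_P$ gives $\|\nabla u_P\|_{L^\infty(P)}\le C\big(\sint_{2P}|\nabla u_P|^2\,dx\big)^{1/2}$, and the standard energy bound for $u_P-u\in W^{1,2}_0(3P)$ yields $\|\nabla u_P\|_{L^2(3P)}\le C\|\nabla u\|_{L^2(3P)}$. Using $3P\subset 3Q$ and applying Hölder once more in the finite sum over $P$ (with exponents $q, q'$) collapses $\sum_P \|B\|_{L^q(P)}\|g\|_{L^{q'}(P)}$ into $\|B\|_{L^q(3Q)}\|g\|_{L^{q'}(3Q)}$, and the hypothesis on $B$ then produces exactly $CD|Q|^{1/q}\big(\sint_{3Q}|\nabla u|^2\big)^{1/2}\|g\|_{L^{q'}(3Q)}$.

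For $\II_P$, I would unwind the definition of $T_{3P,A_P}$. Let $v_P=T_{3P,A_P}(1_P B^T g)\in W^{1,2}_0(3P)$, so that $A_P^T\nabla v_P - 1_P B^T g$ is divergence free in $W^{-1,2}(3P)$. Since $w_P\in W^{1,2}_0(3P)$, testing this divergence-free field against $w_P$ gives $\int_P B\nabla w_P\cdot g = \int_{3P}A_P\nabla w_P\cdot\nabla v_P$. On the other hand, subtracting the equations for $u$ and $u_P$ gives $-\dive A_P\nabla w_P = \dive((A-A_P)\nabla u)$ in $W^{-1,2}(3P)$, so testing against $v_P$ produces $\int_{3P}A_P\nabla w_P\cdot\nabla v_P = -\int_{3P}(A-A_P)\nabla u\cdot\nabla v_P$. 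Summing over $P$ and taking absolute values delivers the second term of the claim.

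The only nontrivial point is the derivation of the equation $-\dive A_P\nabla w_P = \dive((A-A_P)\nabla u)$ and the correct use of $v_P$ as a test function; everything else reduces to a careful Hölder estimate and a Caccioppoli/energy inequality, so I do not anticipate any further obstacle.
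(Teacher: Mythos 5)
Your proposal is correct and follows essentially the same route as the paper's proof: partition $Q$ by $\mathcal{F}(Q)$, freeze coefficients on each $3P$, split into a main term handled by Hölder's inequality together with the interior $L^\infty$ bound from Lemma \ref{lemma:hoelder-constant-no-RHS} plus the energy estimate, and an error term rewritten via the divergence-free structure of $A_P^T\nabla T_{3P,A_P}(f)-f$ and the equation $-\dive A_P\nabla w_P=\dive((A-A_P)\nabla u)$. The only cosmetic difference is that you apply Hölder's inequality across the sum $\sum_P\|B\|_{L^q(P)}\|g\|_{L^{q'}(P)}$ rather than bounding $\|B\|_{L^q(P)}\le\|B\|_{L^q(3Q)}$ termwise as the paper does; both yield the stated constant.
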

 
\begin{proof} 
Because $\mathcal{F}(Q)$ forms a partition of $Q$,
up to a set of measure zero,
it holds 
\begin{align*}
\bigg \lvert\int_{Q}  B(x) & \nabla u(x) \cdot g(x) \, dx \bigg \rvert \\
&
\le \left \lvert\sum_{ {P\in\mathcal{F}(Q)}} \int_{P} B(x) \nabla u_{P}(x) \cdot g(x) \, dx  \right \rvert \\
&\qquad  + \left \lvert\sum_{ {P\in\mathcal{F}(Q)}} \int_{P} B(x) [\nabla u(x)- \nabla u_{P}(x)] \cdot g(x) \, dx \right \rvert \\ 
&= \I + \II  
\end{align*}
where we define $u_{P} \in u +  W^{1,2}_{0}(3P)$ as the function solving 
\[
-\dive A_{P} \nabla u = 0, \qquad A_{P} := \sint_{3P} A(x) \, dx
\]
in the weak sense.

To estimate $\I$,
we apply H\"older's inequality (denoting $q' = q/(q-1)$) 
to estimate
\begin{equation*}
\left \lvert \int 1_{P}(x) B(x) \nabla u_{P}(x) \cdot g(x) \, dx \right \rvert
\le \no{B}_{L^{q}(P)} \no{ \nabla u_{P}}_{L^{\infty}(P)} \no{g}_{L^{q'}(P)} .  
\end{equation*}
Here by Lemma \ref{lemma:hoelder-constant-no-RHS} and the fact that $u_P$ solves an equation with boundary values of $u$ in $3P$
\[
\no{ \nabla u_{P}}_{L^{\infty}(P)} \le C |P|^{-1/2} \no{ \nabla u_{P}}_{L^{2}(2P)}
\le C |P|^{-1/2} \no{ \nabla u}_{L^{2}(3P)}.
\] 
Hence it holds
\begin{multline*}
I\leq C  D |3Q|^{-1/2} \no{\nabla u}_{L^{2}(3Q)} \sum_{P}|P|^{1/q} \no{g}_{L^{q'}(P)} \\
\leq C  D |Q|^{1/q} \left( \sint_{3Q} |\nabla u(x)|^{2} \, dx \right)^{1/2} \no{g}_{L^{q'}(3Q)}
\end{multline*}
which is the desired estimate.
 
We turn the attention to $\II$. 
Denote $w_P = u - u_{P}$ so that $w \in W^{1,2}_{0}(3P)$.
Then 
\begin{multline*}
\int_{P}  B(x) \nabla w_P(x)\cdot g(x) \, dx  \\
= \int_{3P} \nabla w_P(x) \cdot A_{P}^{T}\nabla T_{3P,A_P}( 1_P B^{T}g)(x)  \, dx  
\end{multline*}
by the definition of $T_{3P,A_P}$.
Indeed,
\[
\dive (f - A_{P}^{T} \nabla T_{3P,A_P}f ) = \dive f - \dive f = 0
\]
for all $f \in L^{2}(3P;\rn)$ as an identity in $W^{-1,2}(3P)$.
Further,
we know that 
\[
-\dive A_{P} \nabla w_P = - \dive (A_{P}-A) \nabla u 
\]
as an identity in $W^{-1,2}(3P)$
and so by the weak formulation of the equation
\begin{multline*}
\int_{3P} \nabla w_P(x) \cdot A_{P}^{T}\nabla T_{3P,A_P}(1_{P} B^{T}g)(x)  \, dx  \\
= \int_{3P} (A_{P}-A(x)) \nabla u(x) \cdot \nabla T_{3P,A_P}(1_{P} B^{T}g)(x)  \, dx  
\end{multline*}
which is the second term on the right hand side of the claimed inequality.
\end{proof}

Next,
we may iterate the lemma
and prove a result similar to Theorem \ref{theorem:duality-clean}
but the H\"older assumption replaced by mere uniform continuity and the conclusion featuring $L^{p}$ norm as opposed to a Hardy norm.

\begin{theorem}
\label{theorem:duality-clean-bmo}
Let $0 < \lambda \le \Lambda< \infty$ and $q \in (2,\infty)$.
Let $A$ be a uniformly continuous matrix valued function satisfying $\sigma(A) \subset [\lambda,\Lambda]$.
There exists $\delta = \delta(A,n,q) > 0$ such that the following holds.
Let $Q_0$ be a cube with $\ell(Q_0) < \delta$.
Assume that $u \in W^{1,2}(3Q_0)$ satisfies for all test functions $\eta \in C^{\infty}_c(3Q_0)$
\[
\int A(x) \nabla u(x) \cdot \nabla \eta(x) \, dx = 0.
\] 

Then, 
it holds 
\[
\left( \sint_{Q_0} | \nabla u(x)|^{q} \, dx \right)^{1/q} 
\le C    \left( \sint_{3Q_0} | \nabla u(x)|^{2} \, dx \right)^{1/2}  
\]
where 
\[
C= C(A,\delta,n,q,\lambda,\Lambda).
\] 
\end{theorem}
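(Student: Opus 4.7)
The theorem will follow by adapting the iteration from Theorem~\ref{theorem:duality-clean} but replacing the Hardy--H\"older duality with H\"older's inequality and constant-coefficient $L^{q'}$-regularity. By $L^{q}$--$L^{q'}$ duality, it suffices to bound $\lvert\int_{Q_0}\nabla u \cdot g\,dx\rvert$ for every $g \in L^{q'}(Q_0) \cap L^{2}(Q_0)$ with $\no{g}_{L^{q'}(Q_0)} \le 1$, extended by zero outside $Q_0$:
\[
\left \lvert \int_{Q_0} \nabla u \cdot g \, dx \right \rvert \le C|Q_0|^{1/q}\left(\sint_{3Q_0}|\nabla u|^{2}\,dx\right)^{1/2}.
\]
A first application of Lemma~\ref{lemma:iterable-bmo} with $B=\id$ and $Q=Q_0$ gives such a main term together with an error that is a sum over $P \in \mathcal{F}(Q_0)$ of integrals of the form $\int_{3P}(A-A_P)\nabla u \cdot \nabla T_{3P,A_P}(1_P g)\,dx$. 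Each such summand is again a pairing $\int_{3P}B' \nabla u \cdot g'\, dx$ with $B' = A-A_P$ and $g'=\nabla T_{3P,A_P}(1_P g)$, to which Lemma~\ref{lemma:iterable-bmo} applies at the next level.

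Iterating yields, at level $k$, a family $\mathcal{C}_k$ of at most $8^{nk}$ subcubes $Q$ of side $(3/8)^{k}\ell(Q_0)$ with $3Q \subset 3Q_0$, together with pairs $(B_Q,g_Q)$. Two quantitative inputs drive the recursion. First, uniform continuity of $A$ implies $\no{B_Q}_{L^{\infty}(3Q)} \le \omega_A(c\ell(Q_0))$ uniformly for $Q \in \mathcal{C}_k$ at every level $k \ge 1$, where $\omega_A$ denotes the modulus of continuity of $A$. Second, the constant-coefficient $L^{q'}$-regularity $\no{\nabla T_{3P,A_0}(h)}_{L^{q'}(3P)} \le C_q\no{h}_{L^{q'}(3P)}$ for $q' \in (1,\infty)$ (obtainable by interpolation from the Lax--Milgram $L^{2}$-estimate and the local Hardy-space estimate of Lemma~\ref{lemma:hardy-constant}) ensures that the $L^{q'}$ norm of $g_Q$ is multiplied by at most $C_q\omega_A(c\ell(Q_0))$ relative to that of its parent.

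Summing the main terms over all $Q \in \mathcal{C}_k$ via H\"older's inequality across levels, and using the trivial comparison $(\sint_{3Q}|\nabla u|^{2})^{1/2} \lesssim (|Q_0|/|Q|)^{1/2}(\sint_{3Q_0}|\nabla u|^{2})^{1/2}$, the contribution at level $k$ is dominated by $\kappa^{k}|Q_0|^{1/q}(\sint_{3Q_0}|\nabla u|^{2})^{1/2}$, where
\[
\kappa := C(n,q,\lambda,\Lambda)\,C_q\,\omega_A(c\ell(Q_0))
\]
absorbs the combinatorial count $8^{nk}$, the volume mismatch, and the $L^{q'}$-regularity constants. Choosing $\delta$ small enough that $\kappa < 1$ makes the resulting series over $k$ converge to a geometric multiple of the main term. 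A residual argument, in which the error after $N$ iterations is bounded by a crude Cauchy--Schwarz estimate in $L^{2}$, shows that the remainder vanishes as $N\to\infty$.

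The main obstacle lies in the combinatorial H\"older bookkeeping across levels: one must verify that the $L^{q'}$-smallness of $g_Q$ (powered by $\omega_A$) dominates both the cube count $|\mathcal{C}_k|=8^{nk}$ and the volume mismatch $|Q_0|/|Q|$ when summed in H\"older fashion. Because $\omega_A$ may decrease arbitrarily slowly, the smallness has to come entirely from the choice of $\delta$; moreover, the $L^{q'}$-regularity constant $C_q$ blows up as $q\to\infty$, forcing $\delta = \delta(A,n,q)$ to shrink with $q$ in line with the stated dependence.
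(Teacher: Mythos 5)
Your proposal follows essentially the same route as the paper's proof: iterate Lemma~\ref{lemma:iterable-bmo} with the trivial $L^{\infty}$--$L^{q'}$ H\"older pairing in place of Hardy--H\"older duality, use uniform continuity of $A$ together with the constant-coefficient $L^{q'}$ bound for $\nabla T_{3P,A_P}$ to see that the test function's $L^{q'}$ norm contracts by a factor $\lesssim \omega_A(c\ell(Q_0))$ at each level, and choose $\delta$ small so the resulting series across levels (after accounting for the cube count and volume mismatch) is geometric and summable. The only minor deviation is in how you justify the constant-coefficient $L^{q'}$ estimate (you suggest interpolating the Lax--Milgram $L^2$ bound with the $h^p$ bound of Lemma~\ref{lemma:hardy-constant}, whereas the paper cites Lemma~\ref{lemma:hoelder-constant} and a sparse bound from \cite{MR4794496}); both are valid ways to access this standard fact, so the argument is sound.
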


\begin{proof} 
For a family of cubes $\mathcal{Q}$,
we define $\mathcal{F}(\mathcal{Q}) := \bigcup_{Q \in \mathcal{Q}} \mathcal{F}(Q)$. 
Starting from the initial cube $3Q_0$ and iterating the operation $\mathcal{F}$,
for each $k \in \N$ we have the family $\mathcal{F}^{k}(3Q_0)$.
For $P \in \mathcal{F}^{k}(3Q_0)$ and for $j \in \{0,\ldots,k-1\}$,
we choose one $P^{j} \in \mathcal{F}^{j}(3Q_0)$ such that $P \subset P^{j}$.
We set 
\begin{align*}
\mathcal{O}_{P,0} g &= g \\ 
\mathcal{O}_{P, 1} g &= \nabla T_{3P^{1},A_{ P^{1}}}( 1_{ P^{1}}  g) \\
\mathcal{O}_{P,j+1} g &= \nabla T_{3P^{j+1},A_{ P^{j+1}}}(1_{ P^{j+1}}(A-A_{ P^{j}})^T \mathcal{O}_{P,j} g), \quad 1 \le  j \le k-1 . 
\end{align*}
Denote $p = q/(q-1)$.
Iterating Lemma \ref{lemma:iterable-bmo},
we obtain the estimate  
\begin{multline}
\label{eq:in-iteration-bmo}
\left \lvert \int_{Q_0}  \nabla u(x) \cdot g(x) \, dx \right \rvert \\
\le C  \sum_{k=0}^{\infty}   \sum_{P \in \mathcal{F}^{k}(3Q_0)}  |P|^{1/q}
\left( \sint_{3P} |  \nabla u(x)|^{2} \, dx \right)^{1/2} \no{\mathcal{O}_{P,k} g}_{L^{q'}(3P)}
\end{multline}
where $C$ is the constant induced by Lemma \ref{lemma:iterable-bmo}.
By uniform continuity of $A$,
we know that given $\varepsilon > 0$,
provided that $\delta = \delta(\varepsilon) > 0$ is small enough, 
then for all $P \in \bigcup_{k=0}^{\infty} \mathcal{F}(3Q_0)$
\[
\no{A-A_{P}}_{L^{\infty}(3Q_0)} \le \varepsilon .
\] 
Then, using Definition \ref{def:T-operator}, 
the classical $L^{p}$-bound for the operator $T_{P,A}$ (e.g.\ as a corollary of Lemma \ref{lemma:hoelder-constant} an Corollary 1.3 in \cite{MR4794496}), 
and the uniform continuity of $A$,
we see that for $P \in \mathcal{F}^{k}(3Q_0)$ 
\[
\no{\mathcal{O}_{P,k} g}_{L^{q'}(3P)}
\le (C \varepsilon)^{k} \no{ g}_{L^{q'}(3Q_0)}.
\]  
Trivially also  
\[
\left( \int_{3P} | \nabla u(x)|^{2} \, dx \right)^{1/2} 
\le  \left( \int_{3Q_0} | \nabla u(x)|^{2} \, dx \right)^{1/2} 
\]
so that by H\"older's inequality the right hand side of \eqref{eq:in-iteration-bmo} becomes bounded by
\[
C |Q_0|  \left( \sint_{3Q_0} | \nabla u(x)|^{2} \, dx \right)^{1/2} \left( \sint_{3Q_0} |g(x)|^{p} \, dx \right)^{1/p}   \sum_{k=0}^{\infty} [ C \varepsilon ]^{k}   
\]
for $C = C(n,p,\lambda,\Lambda)$.
Hence for $\varepsilon = \varepsilon(n,p,\lambda,\Lambda)$ small enough,
we see that the sum converges. 
Taking supremum over all $g \in L^{p}(3Q_0)$ with $\no{g}_{L^{p}(3Q_0)} \le |Q_0|^{1/p}$,
we see that the claim follows.
\end{proof}

\begin{remark}
Theorem \ref{theorem:duality-clean-bmo} together with \cite{MR4794496} 
implies sparse bounds and further local Calder\'on--Zygmund theory
for equations with uniformly continuous coefficients.
We also point out a small correction to \cite{MR4794496}:
In that paper, the smooth domains $O_P \cap \Omega$ relative 
to cubes $P$ cannot be defined as an intersection as written, 
but at small enough scales it is easy to see there exist domains as smooth as $\Omega$ contained in $3P \cap \Omega$ and $C^{1}$-Dini norms independent of $P$,
which can be used instead. 
We thank Ya (Grace) Gao from Brown University for bringing this point to our attention.
\end{remark}


\bibliography{references}

\begin{thebibliography}{10}

\bibitem{MR3887613}
L.~Ambrosio, A.~Carlotto, and A.~Massaccesi.
\newblock {\em Lectures on elliptic partial differential equations}, volume~18 of {\em Appunti. Scuola Normale Superiore di Pisa (Nuova Serie) [Lecture Notes. Scuola Normale Superiore di Pisa (New Series)]}.
\newblock Edizioni della Normale, Pisa, 2018.
\newblock Available at https://cvgmt.sns.it/media/doc/paper/1280/PDEAAA.pdf.

\bibitem{MR2384540}
P.~Auscher, F.~Bernicot, and J.~Zhao.
\newblock Maximal regularity and {H}ardy spaces.
\newblock {\em Collect. Math.}, 59(1):103--127, 2008.

\bibitem{MR4628043}
P.~Auscher and M.~Egert.
\newblock {\em Boundary value problems and {H}ardy spaces for elliptic systems with block structure}, volume 346 of {\em Progress in Mathematics}.
\newblock Birkh\"auser/Springer, Cham, 2023.

\bibitem{MR1600066}
P.~Auscher, A.~McIntosh, and P.~Tchamitchian.
\newblock Heat kernels of second order complex elliptic operators and applications.
\newblock {\em J. Funct. Anal.}, 152(1):22--73, 1998.

\bibitem{MR3897969}
C.~Benea and F.~Bernicot.
\newblock Conservation de certaines propri\'et\'es \`a{} travers un contr\^ole \'epars d'un op\'erateur et applications au projecteur de {L}eray-{H}opf.
\newblock {\em Ann. Inst. Fourier (Grenoble)}, 68(6):2329--2379, 2018.

\bibitem{MR3531367}
F.~Bernicot, D.~Frey, and S.~Petermichl.
\newblock Sharp weighted norm estimates beyond {C}alder\'{o}n-{Z}ygmund theory.
\newblock {\em Anal. PDE}, 9(5):1079--1113, 2016.

\bibitem{MR4359452}
D.~Breit, A.~Cianchi, L.~Diening, and S.~Schwarzacher.
\newblock Global {S}chauder estimates for the {$p$}-{L}aplace system.
\newblock {\em Arch. Ration. Mech. Anal.}, 243(1):201--255, 2022.

\bibitem{MR2069724}
S.-S. Byun and L.~Wang.
\newblock Elliptic equations with {BMO} coefficients in {R}eifenberg domains.
\newblock {\em Comm. Pure Appl. Math.}, 57(10):1283--1310, 2004.

\bibitem{MR1486629}
L.~A. Caffarelli and I.~Peral.
\newblock On {$W^{1,p}$} estimates for elliptic equations in divergence form.
\newblock {\em Comm. Pure Appl. Math.}, 51(1):1--21, 1998.

\bibitem{MR192168}
S.~Campanato.
\newblock Equazioni ellittiche del {${\rm II}\deg $} ordine espazi {$\mathcal{L}^{(2,\lambda )}$}.
\newblock {\em Ann. Mat. Pura Appl. (4)}, 69:321--381, 1965.

\bibitem{MR1253178}
D.-C. Chang.
\newblock The dual of {H}ardy spaces on a bounded domain in {${\bf R}^n$}.
\newblock {\em Forum Math.}, 6(1):65--81, 1994.

\bibitem{MR2515406}
D.-C. Chang, G.~Dafni, and H.~Yue.
\newblock A div-curl decomposition for the local {H}ardy space.
\newblock {\em Proc. Amer. Math. Soc.}, 137(10):3369--3377, 2009.

\bibitem{MR1223705}
D.-C. Chang, S.~G. Krantz, and E.~M. Stein.
\newblock {$H^p$} theory on a smooth domain in {${\bf R}^N$} and elliptic boundary value problems.
\newblock {\em J. Funct. Anal.}, 114(2):286--347, 1993.

\bibitem{MR3668591}
J.~M. Conde-Alonso, A.~Culiuc, F.~Di~Plinio, and Y.~Ou.
\newblock A sparse domination principle for rough singular integrals.
\newblock {\em Anal. PDE}, 10(5):1255--1284, 2017.

\bibitem{MR4475438}
G.~Dafni, C.~H. Lau, T.~Picon, and C.~Vasconcelos.
\newblock Inhomogeneous cancellation conditions and {C}alder\'on-{Z}ygmund type operators on {$h^p$}.
\newblock {\em Nonlinear Anal.}, 225:Paper No. 113110, 22, 2022.

\bibitem{MR4758469}
G.~Dafni, C.~H. Lau, T.~Picon, and C.~Vasconcelos.
\newblock Necessary cancellation conditions for the boundedness of operators on local {H}ardy spaces.
\newblock {\em Potential Anal.}, 61(1):1--11, 2024.

\bibitem{MR4665778}
C.~De~Filippis and G.~Mingione.
\newblock Nonuniformly elliptic {S}chauder theory.
\newblock {\em Invent. Math.}, 234(3):1109--1196, 2023.

\bibitem{MR1405255}
G.~Di~Fazio.
\newblock {$L^p$} estimates for divergence form elliptic equations with discontinuous coefficients.
\newblock {\em Boll. Un. Mat. Ital. A (7)}, 10(2):409--420, 1996.

\bibitem{MR3747493}
H.~Dong, L.~Escauriaza, and S.~Kim.
\newblock On {$C^1$}, {$C^2$}, and weak type-{$(1,1)$} estimates for linear elliptic operators: part {II}.
\newblock {\em Math. Ann.}, 370(1-2):447--489, 2018.

\bibitem{MR1800316}
J.~Duoandikoetxea.
\newblock {\em Fourier analysis}, volume~29 of {\em Graduate Studies in Mathematics}.
\newblock American Mathematical Society, Providence, RI, 2001.
\newblock Translated and revised from the 1995 Spanish original by David Cruz-Uribe.

\bibitem{MR2597943}
L.~C. Evans.
\newblock {\em Partial differential equations}, volume~19 of {\em Graduate Studies in Mathematics}.
\newblock American Mathematical Society, Providence, RI, second edition, 2010.

\bibitem{MR447953}
C.~Fefferman and E.~M. Stein.
\newblock {$H\sp{p}$} spaces of several variables.
\newblock {\em Acta Math.}, 129(3-4):137--193, 1972.

\bibitem{MR2240056}
M.~Gei\ss{}ert, H.~Heck, and M.~Hieber.
\newblock On the equation {${\rm div}\,u=g$} and {B}ogovski\u i's operator in {S}obolev spaces of negative order.
\newblock In {\em Partial differential equations and functional analysis}, volume 168 of {\em Oper. Theory Adv. Appl.}, pages 113--121. Birkh\"auser, Basel, 2006.

\bibitem{MR749677}
M.~Giaquinta and E.~Giusti.
\newblock Global {$C^{1,\alpha }$}-regularity for second order quasilinear elliptic equations in divergence form.
\newblock {\em J. Reine Angew. Math.}, 351:55--65, 1984.

\bibitem{MR1814364}
D.~Gilbarg and N.~S. Trudinger.
\newblock {\em Elliptic partial differential equations of second order}.
\newblock Classics in Mathematics. Springer-Verlag, Berlin, 2001.
\newblock Reprint of the 1998 edition.

\bibitem{MR523600}
D.~Goldberg.
\newblock A local version of real {H}ardy spaces.
\newblock {\em Duke Math. J.}, 46(1):27--42, 1979.

\bibitem{MR3342492}
G.~Hoepfner, J.~Hounie, and T.~Picon.
\newblock Div-curl type estimates for elliptic systems of complex vector fields.
\newblock {\em J. Math. Anal. Appl.}, 429(2):774--799, 2015.

\bibitem{MR1720770}
J.~Kinnunen and S.~Zhou.
\newblock A local estimate for nonlinear equations with discontinuous coefficients.
\newblock {\em Comm. Partial Differential Equations}, 24(11-12):2043--2068, 1999.

\bibitem{MR2900466}
T.~Kuusi and G.~Mingione.
\newblock Universal potential estimates.
\newblock {\em J. Funct. Anal.}, 262(10):4205--4269, 2012.

\bibitem{MR3625108}
M.~T. Lacey.
\newblock An elementary proof of the {$A_2$} bound.
\newblock {\em Israel J. Math.}, 217(1):181--195, 2017.

\bibitem{MR4058547}
A.~K. Lerner and S.~Ombrosi.
\newblock Some remarks on the pointwise sparse domination.
\newblock {\em J. Geom. Anal.}, 30(1):1011--1027, 2020.

\bibitem{Meyers1963}
N.~{Meyers}.
\newblock An \({L}^ p\)-estimate for the gradient of solutions of second order elliptic divergence equations.
\newblock {\em Ann. Sc. Norm. Super. Pisa Cl. Sci. (5)}, 17:189--206, 1963.

\bibitem{Meyers1975}
N.~G. {Meyers} and A.~{Elcrat}.
\newblock Some results on regularity for solutions of non-linear elliptic systems and quasi-regular functions.
\newblock {\em Duke Math. J.}, 42:121--136, 1975.

\bibitem{MR4794496}
O.~Saari, H.-Y. Wang, and Y.~Wei.
\newblock Sparse gradient bounds for divergence form elliptic equations.
\newblock {\em J. Differential Equations}, 413:606--631, 2024.

\bibitem{MR2216900}
Y.~Sun and W.~Su.
\newblock Interior {$h^1$}-estimates for second order elliptic equations with vanishing {LMO} coefficients.
\newblock {\em J. Funct. Anal.}, 234(2):235--260, 2006.

\bibitem{MR2395177}
L.~Yan.
\newblock Classes of {H}ardy spaces associated with operators, duality theorem and applications.
\newblock {\em Trans. Amer. Math. Soc.}, 360(8):4383--4408, 2008.

\end{thebibliography}
\bibliographystyle{abbrv}

\end{document}